\newcommand{\sk}{{\ensuremath{\sf k }}}
\newcommand{\m}{\ensuremath{\mathfrak m}}
\DeclareMathOperator{\Tor}{Tor}
\newtheorem{conjeture}{ Conjecture}[section]
\newtheorem{theorem}[conjeture]{ Theorem}
\newtheorem{lemma}[conjeture]{ Lemma}
\newtheorem{corollary}[conjeture]{ Corollary}
\newtheorem{proposition}[conjeture]{ Proposition}
\theoremstyle{definition}
\newtheorem{remark}[conjeture]{ Remark}
\newtheorem{notation}[conjeture]{ Notation}
\providecommand\ass{\text{\rm Ass}}
\providecommand\codim{\text{\rm codim}}
\providecommand\depth{\text{\rm depth}}
\renewcommand\dim{\text{\rm dim}}
\providecommand\pd{\text{\rm pd}}
\providecommand\hgt{\text{\rm ht}}
\providecommand\Minh{\text{\rm Minh}}
\providecommand\max{\text{\rm max}}
\providecommand{\Z}{{\mathbb Z}}
\providecommand\depth{{\rm depth}}
\providecommand\Tor{{\rm Tor}}
\providecommand\reg{{\rm reg}}
\begin{document}
\title{Certain Algebraic Invariants of Edge Ideals of Join of Graphs}
\author[Arvind Kumar]{Arvind Kumar$^1$}
\email{arvkumar11@gmail.com}
\thanks{$^1$ The author is funded by National Board for Higher Mathematics, India}
\address{Department of Mathematics, Indian Institute of Technology
    Madras, Chennai, INDIA - 600036}
\author[Rajiv Kumar]{Rajiv Kumar}
\email{gargrajiv00@gmail.com}
\address{Department of Mathematics, The LNM Institute of Information Technology, Jaipur, Rajasthan, INDIA - 302031}
\author[Rajib Sarkar]{Rajib Sarkar$^2$}
\email{rajib.sarkar63@gmail.com}
\thanks{$^2$ The author is funded by University Grants Commission,
    India}
\address{Department of Mathematics, Indian Institute of Technology
    Madras, Chennai, INDIA - 600036}
\begin{abstract}
    Let $G$ be a simple graph and $I(G)$ be its edge ideal. In this article, we study the Castelnuovo-Mumford regularity of symbolic powers of edge ideals of join of graphs. As a consequence, we prove Minh's conjecture for wheel graphs, complete multipartite graphs, and a subclass of co-chordal graphs. We obtain a class of graphs whose edge ideals have regularity three. By constructing graphs, we prove that the multiplicity of edge ideals of graphs is independent from the depth, dimension, regularity, and degree of $h$-polynomial. Also, we demonstrate that the depth of edge ideals of graphs is independent from the regularity and degree of $h$-polynomial by constructing graphs. 
\end{abstract}

\subjclass[2010]{Primary  13D02, 13F55, 05E40}

\keywords{Edge Ideal, Regularity, Multiplicity, Symbolic Power}
\maketitle
\section{Introduction}
Let $G$ be a simple graph on the vertex set $V(G)=\{x_1,\dots,x_n\}$ and edge set $E(G)$. 
Set $S=\sk[x_1,\dots,x_n]$, where $\sk$ is an arbitrary field. The \textit{edge ideal} of $G$,
denoted by $I(G)$, is defined as $I(G):=(x_ix_j:\{x_i,x_j\}\in E(G))$. From the past one decade, many authors
established connections between the combinatorial properties of $G$ and the algebraic properties of $I(G)$. 
The $s$-th \textit{symbolic power} of $I(G)$ is defined as follows:
\[I(G)^{(s)}=\bigcap\limits_{\mathfrak{p}\in \ass(I(G))}\mathfrak{p}^s. \]
Let $I$ be a homogeneous ideal in $S$. The \textit{regularity} of $I$, denoted by $\reg( I)$, is defined as $\reg( I):=\max \{j: \Tor_{i}^S(I,\sk)_{i+j}\neq 0, \; i \geq 0 \}$. It is known that $\reg(S/I)= \reg(I)-1$.
Minh stated the following conjecture for the regularity of symbolic power of edge ideal, see \cite{GHJsymbo}:
\begin{conjeture}\label{conj}
    Let $G$ be a finite simple graph and $I(G)$ be its edge ideal. Then, $$\reg (I(G)^{(s)})=\reg(I(G)^s) \text{ for all } s\geq 1.$$ 
\end{conjeture}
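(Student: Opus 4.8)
The plan is to reduce the conjecture to a purely combinatorial comparison of local cohomology and to isolate the non-bipartite case as the only substantive one. Since $I(G)$ is a squarefree monomial ideal it is radical, so $\ass(I(G))=\Minh(I(G))$ consists exactly of the primes $\mathfrak p_C=(x_i:x_i\in C)$ attached to the minimal vertex covers $C$ of $G$, and hence
\[
I(G)^{(s)}=\bigcap_{C}\mathfrak p_C^{\,s}.
\]
One always has $I(G)^s\subseteq I(G)^{(s)}$. The first reduction is to dispose of bipartite graphs: by the classical theorem of Simis--Vasconcelos--Villarreal, $G$ is bipartite if and only if $I(G)$ is normally torsion-free, equivalently $I(G)^{(s)}=I(G)^s$ for all $s\ge 1$; in that case the asserted equality of regularities is immediate. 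Thus it suffices to treat non-bipartite $G$, where the difference module $I(G)^{(s)}/I(G)^s$ is nonzero and, via the theory of symbolic Rees algebras of edge ideals, is governed by the odd cycles of $G$.

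Second, I would compute both regularities through the local-cohomology formula $\reg(S/J)=\max_i\{\,\mathrm{end}(H^i_{\mathfrak m}(S/J))+i\,\}$, applied to $J=I(G)^s$ and to $J=I(G)^{(s)}$. For the symbolic power the graded pieces of $H^i_{\mathfrak m}(S/I(G)^{(s)})$ admit a Takayama-type description: because $I(G)^{(s)}$ is an explicit intersection of prime powers, its degree complexes in each multidegree are determined by the covering/independence structure of $G$, which yields a combinatorial expression for $\reg(S/I(G)^{(s)})$ in terms of reduced simplicial homology of these symbolic degree complexes. I would package this as $\reg(I(G)^{(s)})=2s+\varepsilon^{(s)}(G)$, with the correction $\varepsilon^{(s)}(G)$ read off from that homology.

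Third, I would carry out the same computation for the ordinary power $I(G)^s$ and compare. The heart of the argument is to show that the top nonvanishing value of $\mathrm{end}(H^i_{\mathfrak m})+i$ is the same for $I(G)^s$ and for $I(G)^{(s)}$. To make this precise I would polarize: replacing both ideals by their polarizations preserves graded Betti numbers, hence regularity, and turns the degree complexes into genuine Stanley--Reisner complexes of the same shape, so the comparison reduces to a statement about the top reduced homology of two nested families of simplicial complexes in the relevant homological range. Alongside this I would use the short exact sequence
\[
0\to I(G)^{(s)}/I(G)^s\to S/I(G)^s\to S/I(G)^{(s)}\to 0
\]
to transfer bounds between the two quotients, the goal being to show $\reg\bigl(I(G)^{(s)}/I(G)^s\bigr)$ never exceeds the common value $2s+\varepsilon^{(s)}(G)$.

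The main obstacle is exactly this last comparison, and it is where the fundamental asymmetry of the problem lives. The symbolic power is combinatorially transparent, but $I(G)^s$ acquires embedded associated primes coming from odd cycles, so its degree complexes are genuinely more intricate and depend on the arithmetic of how many times each odd-cycle obstruction is used in a given multidegree. The crux is therefore to prove, \emph{uniformly over all non-bipartite $G$ and all $s$}, that these extra odd-cycle contributions never strictly raise the top homological shift beyond the symbolic value. I expect that controlling this uniformly — rather than graph-class by graph-class as in the join constructions treated elsewhere in this paper — is the step demanding a genuinely new idea; a plausible route is an induction by deletion or contraction along odd cycles, fed into the exact sequence above, so that the embedded-prime corrections in $\reg(I(G)^{(s)}/I(G)^s)$ are forced to stay below the common bound.
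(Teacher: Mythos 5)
First, a point of orientation: the statement you set out to prove is Minh's conjecture, which the paper records as Conjecture \ref{conj} precisely because it is \emph{open}; the paper itself does not prove it in general, but only for special classes of graphs (wheel graphs, complete multipartite graphs, and joins of graphs in $\mathcal{A}\cap\mathcal{A}_1$, including a subclass of co-chordal graphs), by first establishing a formula for $\reg(I(G)^{(s)})$ when $G$ is a join of graphs (Theorem \ref{symRegLem}) and then invoking known results for the factors. So there is no full proof in the paper to compare against, and any blind attempt at the general statement must be judged as an attempt at an open problem.

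Judged on those terms, your proposal has a genuine and decisive gap, which you yourself name: the entire content of the conjecture sits in the step where you must show, uniformly over all non-bipartite $G$ and all $s\geq 1$, that the top homological shift computed from the degree complexes of $I(G)^s$ agrees with the symbolic one. Everything before that step is either standard (the bipartite reduction via normal torsion-freeness of $I(G)$, Takayama-type descriptions of local cohomology, the exact sequence $0\to I(G)^{(s)}/I(G)^s\to S/I(G)^s\to S/I(G)^{(s)}\to 0$) or unproven packaging: the formula $\reg(I(G)^{(s)})=2s+\varepsilon^{(s)}(G)$ is asserted, not derived, and no such closed form is known in this generality. Note also two technical soft spots. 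Regularity is not monotone under inclusion, so the exact sequence by itself transfers nothing in either direction without an independent bound on $\reg\bigl(I(G)^{(s)}/I(G)^s\bigr)$, which is exactly what is missing. And polarization, while it preserves graded Betti numbers and hence regularity, turns the two ideals into Stanley--Reisner ideals of complexes whose comparison is no more transparent than the original problem, so it does not reduce the difficulty. In short, what you have written is a reasonable research outline that correctly locates the obstruction, but it is not a proof: the step you defer to ``a genuinely new idea'' is the conjecture itself.
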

Gu et al. studied Conjecture \ref{conj} for odd cycles in \cite{GHJsymbo}. In \cite{JK}, Jayanthan and Kumar
settled Conjecture \ref{conj} for  graphs, which are clique sum of an odd cycle and some bipartite graphs.
In \cite{Seyed-chordal} and \cite{FaUni}, Fakhari studied Conjecture \ref{conj} for chordal graphs and unicyclic graphs, respectively. In \cite{Se}, Selvaraja studied the regularity of $I(G)^s$ when $G$ is join of graphs. In this article, we study the regularity of $I(G)^{(s)}$, where $G$ is join of graphs. As a consequence, we prove Conjecture \ref{conj}
for some classes of graphs, for example, wheel graphs, complete multipartite graphs, and a subclass of co-chordal graphs.

In \cite{HMAedge}, Hibi et al. studied the relationship between regularity, degree of $h$-polynomial, and the number of
vertices. For a given pair of positive integers $(r,s)$, they constructed a graph whose edge ideal has regularity
$r$ and degree of $h$-polynomial $s$. In \cite{HKMext}, Hibi et al. constructed a graph for a given pair $(b,r)$ with $b\leq r$, 
where $b$ is the number of extremal Betti numbers of $S/I(G)$ and $r=\reg(S/I(G))$. In \cite{Gim-14}, Ramos and Gimenez characterized bipartite graphs whose edge ideals have regularity $3$. In this article, we obtain a class of graphs whose edge ideals have regularity three. Moreover, for a given pair of positive integers $(r,d)$ with $r\leq d$, we construct a graph $G$ such that $\reg(S/I(G))=r$ and $\dim(S/I(G))=d$. Also, we prove that the multiplicity of edge ideal of a graph has no relation with dimension, depth, regularity, and degree of $h$-polynomial. It is known that the depth and degree of $h$-polynomial for a square-free monomial ideal are bounded above by the dimension. We show that the depth of edge ideal of a graph is independent from the degree of $h$-polynomial and regularity.

\vskip 2mm
\noindent
\textbf{Acknowledgement:} The authors are grateful to A. V. Jayanthan for his
constant support and valuable comments. We want to express our
sincere thanks to the referees for a careful reading of the manuscript and raising crucial
points.
\section{Preliminaries}
In this section, we collect all the notation and definitions which are used throughout this article. For the undefined
terminology, we refer the readers to the book \cite{Mon} by Herzog and Hibi.
\subsection{Combinatorics}
Let $G$ be a simple graph on the vertex set $V(G)$ and edge set $E(G)$.
\begin{enumerate}[\rm a)]
    \item A graph $H$ is called an \emph{induced subgraph} of $G$ if $V(H) \subset V(G)$ 
    and for $i,j\in V(H)$, $\{i,j\}\in E(H)$ if and only if $\{i,j\}\in E(G)$.
    \item The \textit{complement} of a graph $G$, denoted by $G^c$, is the graph on the vertex set $V(G)$ 
    and edge set $\{ \{i,j\}:\{i,j\}\notin E(G) \text{ and } i \neq j \}$.
    \item A graph $G$ is \emph{chordal} if there is no induced cycle of length $\geq 4$ and it is \textit{co-chordal}
    if $G^c$ is chordal.
    \item A collection $C \subset E(G)$ of disjoint edges is said to be an  \textit{induced matching} if the induced 
    subgraph on the vertices of $C$ has no edge other than the edges in $C$. The maximum size of an induced matching 
    in $G$ is called the \textit{induced  matching number} of $G$ and is denoted by $\nu(G)$. 
    \item A collection $\Delta$ of subsets of $[n]$ is called a \textit{simplicial complex} if it satisfies the 
    following:
    \begin{enumerate}[\rm i)]
        \item for any $i\in [n]$, $\{i\}\in \Delta$,
        \item $F\subseteq G$ with $G\in \Delta$ implies that $F\in \Delta$.
    \end{enumerate}
    \item An element of $\Delta$ is called a \textit{face} and a maximal face with respect to  inclusion 
    is called a \textit{facet}.
    \item The \textit{dimension} of a simplicial complex  $\Delta$ is defined as $$\dim(\Delta)=\max \{|F|-1 : F \text{ is 
    a facet in } \Delta \} .$$
    \item Let $G$ and $H$ be graphs on vertex sets $V(G)=\{x_1,\dots, x_m\}$ and $V(H)=\{y_1,\dots, y_n\}$, 
    respectively. Then, the \emph{join} of $G$ and $H$, denoted by $G\ast H$, is a graph on the vertex set
    $V(G)\sqcup V(H)$ and edge set $E(G)\cup E(H)\cup\{\{x_i,y_j\}: 1\leq i\leq m, 1\leq j\leq n\}$.  The  graph shown in Figure \ref{join} is join of a vertex and cycle $C_5$. For a graph $G$ and $l\geq 1$, we denote $G^{\ast l}$ to be join of $l$-copies of $G$. In particular, $G^{\ast 1}=G$.
    \begin{figure}[H]
    \begin{tikzpicture}[scale =.8]
    \draw (0.98,-1.48)-- (0.04,0.34);
    \draw (3.58,-1.04)-- (0.98,-1.48);
    \draw (2.04,0.2)-- (3.58,-1.04);
    \draw (3.58,-1.04)-- (3.56,1.16);
    \draw (2.04,0.2)-- (3.56,1.16);
    \draw (2.04,0.2)-- (0.04,0.34);
    \draw (2.04,0.2)-- (0.98,-1.48);
    \draw (1.54,1.78)-- (0.04,0.34);
    \draw (3.56,1.16)-- (1.54,1.78);
    \draw (2.04,0.2)-- (1.54,1.78);
    \begin{scriptsize}
    \fill  (0.04,0.34) circle (1.5pt);
    \fill  (1.54,1.78) circle (1.5pt);
    \fill  (3.56,1.16) circle (1.5pt);
    \fill  (2.04,0.2) circle (1.5pt);
    \fill  (0.98,-1.48) circle (1.5pt);
    \fill  (3.58,-1.04) circle (1.5pt);
    \end{scriptsize}
    \end{tikzpicture}
    \caption{}\label{join}
    \end{figure}   
\end{enumerate}

\subsection{Hilbert Series}
Let $S=\sk[x_1,\dots,x_n]$ be a standard graded polynomial ring and $I\subseteq S$ be a homogeneous ideal with $\dim(S/I)=d$. By \cite[Theorem 6.1.3]{Mon}, the Hilbert series of $S/I$ is $H(S/I,t)=(h_0+h_1t+\dots+h_st^s)/(1-t)^d$ for $h_i\in \Z$ with $h_s\neq 0$.
\begin{enumerate}[\rm a)]
    \item The polynomial $h_0+h_1t+\dots+h_st^s$, denoted by $h_{S/I}(t)$, is called the $h$-polynomial of $S/I$.
    \item  The multiplicity of $S/I$, denoted by $e(S/I)$, is $h_{S/I}(1)$.
\end{enumerate}

\section{Regularity of symbolic powers of join of graphs} 
In this section, we study the regularity of symbolic powers of join of graphs. First, we prove a technical lemma, which is useful to prove our main result of this section.
\begin{lemma}\label{ArtinianReg}
    Let $S=\sk[x_1,\ldots,x_m,y_1,\ldots,y_n]$ be a polynomial ring.    Let $I\subset \langle x_1,\dots, x_m\rangle^2$ be a square-free monomial ideal in $\sk[x_1,\ldots,x_m]$, and $J\subset \langle y_1,\dots, y_n\rangle^2$ be  a square-free monomial  ideal in $\sk[y_1,\ldots,y_n]$. Then, $$\reg\left((I+\langle y_1,\dots, y_n\rangle)^{(s)}+(J+\langle x_1,\dots, x_m\rangle)^{(s)}\right)=2s-1, \text{ for all } s \geq 1.$$
\end{lemma}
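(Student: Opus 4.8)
The plan is to exploit that the ideal in question is $\mathfrak{M}$-primary, where $\mathfrak{M}=\langle x_1,\dots,x_m,y_1,\dots,y_n\rangle$ is the graded maximal ideal, and thereby reduce the regularity computation to a degree count. Write $\mathfrak{m}=\langle x_1,\dots,x_m\rangle$, $\mathfrak{n}=\langle y_1,\dots,y_n\rangle$, and set $L=(I+\mathfrak{n})^{(s)}+(J+\mathfrak{m})^{(s)}$. Since $I$ and $J$ are square-free monomial ideals, each is radical, so $\ass=\operatorname{Min}$; consequently $(I+\mathfrak{n})^{(s)}=\bigcap_{P\in\operatorname{Min}(I)}(P+\mathfrak{n})^{s}$ and $(J+\mathfrak{m})^{(s)}=\bigcap_{Q\in\operatorname{Min}(J)}(Q+\mathfrak{m})^{s}$, each an intersection of powers of monomial primes. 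The first thing I would record is the pair of containments $\mathfrak{n}^{s}\subseteq(I+\mathfrak{n})^{(s)}$ and $\mathfrak{m}^{s}\subseteq(J+\mathfrak{m})^{(s)}$, which follow at once from $\mathfrak{n}\subseteq P+\mathfrak{n}$ and $\mathfrak{m}\subseteq Q+\mathfrak{m}$. In particular $\mathfrak{m}^{s}+\mathfrak{n}^{s}\subseteq L$, so $x_i^{s},y_j^{s}\in L$ for all $i,j$; hence $S/L$ has finite length and is Artinian.

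For an Artinian graded $S$-module $M$ one has $H^{i}_{\mathfrak{M}}(M)=0$ for $i\geq 1$ and $H^{0}_{\mathfrak{M}}(M)=M$, so $\reg(M)=\max\{t:M_t\neq 0\}$. Applying this to $M=S/L$ and using $\reg(L)=\reg(S/L)+1$, the whole lemma reduces to the single claim that the top nonvanishing degree of $S/L$ is $2s-2$; that is, $L$ contains every monomial of degree $\geq 2s-1$, while $(S/L)_{2s-2}\neq 0$.

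The upper bound is a pigeonhole argument. A monomial $w=x^{\alpha}y^{\beta}$ of degree $|\alpha|+|\beta|\geq 2s-1$ must satisfy $|\alpha|\geq s$ or $|\beta|\geq s$; in the first case $w\in\mathfrak{m}^{s}$, in the second $w\in\mathfrak{n}^{s}$, so $w\in L$ by the containments above, giving $(S/L)_j=0$ for $j\geq 2s-1$. For the lower bound I would exhibit the monomial $u=x_1^{s-1}y_1^{s-1}$, of degree $2s-2$, and show $u\notin L$. Because $L$ is a sum of two monomial ideals, it suffices to check $u\notin(I+\mathfrak{n})^{(s)}$ and $u\notin(J+\mathfrak{m})^{(s)}$ separately. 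Since the generators of $I$ have degree $\geq 2$, the variable $x_1$ does not lie in the radical ideal $I=\bigcap_{P}P$, so some $P\in\operatorname{Min}(I)$ omits $x_1$; then $P+\mathfrak{n}$ is generated by variables none of which is $x_1$, and the degree of $u$ in those variables comes only from $y_1^{s-1}$, which is $s-1<s$. Hence $u\notin(P+\mathfrak{n})^{s}$ and therefore $u\notin(I+\mathfrak{n})^{(s)}$. The symmetric argument, using a minimal prime of $J$ omitting $y_1$, gives $u\notin(J+\mathfrak{m})^{(s)}$. Thus $u\notin L$, so $(S/L)_{2s-2}\neq 0$, and the lemma follows.

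I would add that the analysis is uniform in the degenerate cases $I=0$ or $J=0$ (take the minimal prime $(0)$), and that the hypothesis $I,J\subseteq\langle\cdot\rangle^{2}$ is genuinely used only in guaranteeing a minimal prime avoiding $x_1$ (respectively $y_1$) for the lower bound; the upper bound and the Artinian reduction use nothing beyond $\mathfrak{m}^{s}+\mathfrak{n}^{s}\subseteq L$. The one step requiring care is the non-membership $u\notin(I+\mathfrak{n})^{(s)}$: one must pass to the explicit monomial-prime description of the symbolic power and use that membership of a monomial in a power of a monomial prime is governed solely by its degree in the generating variables. This combinatorial point, rather than any hard homological input, is the crux of the argument.
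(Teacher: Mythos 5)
Your proposal is correct and follows essentially the same route as the paper's proof: reduce to the Artinian observation that $x_i^s,y_j^s$ lie in the ideal so that regularity is the top nonvanishing degree of the quotient plus one, check vanishing in degrees $\geq 2s-1$, and exhibit $x_1^{s-1}y_1^{s-1}$ as a nonzero class in degree $2s-2$. The only difference is that you spell out the two steps the paper labels ``easy to see'' (the pigeonhole bound and the non-membership of $x_1^{s-1}y_1^{s-1}$ via a minimal prime avoiding $x_1$), which is a welcome elaboration rather than a departure.
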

\begin{proof}
    Let $K=(I+\langle y_1,\dots, y_n\rangle)^{(s)}+(J+\langle x_1,\dots, x_m\rangle)^{(s)}$. Since $x_i^s, y_j^s\in K$ for all $i,j$, $K$ is an $\m$-primary ideal, where $\m=\langle x_1,\ldots,x_m,y_1,\ldots,y_n\rangle$. Therefore, $$\reg(K)=\max\{j+1:(S/K)_j\neq 0\}.$$ 
    It is easy to see that $(S/K)_j=0$, for all $j\geq 2s-1$, we have $\reg(K) \leq 2s-1$. Since $I$ and $J$ are square-free monomial ideals, $x_1^{s-1}y_1^{s-1}\notin K$. Therefore, $x_1^{s-1}y_1^{s-1}\in (S/K)_{2s-2}$ is a nonzero element, and  the assertion follows.
\end{proof}
We now fix some notation which are used through out this section. For $1 \leq j \leq r$, set $X_j=\{x_{j1},\ldots,x_{jm_{j}}\}$. Let $G_1,\ldots,G_r$ be graphs on vertex sets $X_1,\ldots,X_r$, respectively. Also, for $1 \leq j \leq r$, set  $\m_j = \langle x_{j1},\ldots,x_{jm_j}\rangle$.
\begin{theorem}\label{symRegLem}
    For $r\geq 2$, let $G_1,\dots,G_r$ be graphs on vertex sets $X_1,\dots, X_r$, respectively. Let $G=G_1\ast\dots\ast G_r$ and $S=\sk[V(G)]$. Then, $$\reg(I(G)^{(s)})=\max\{\reg\left(I(G_j)^{(i)}\right)-i+s: 1 \leq i \leq s, \; 1 \leq j \leq r \}, \text{ for all } s \geq 1.$$
\end{theorem}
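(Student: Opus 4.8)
The plan is to induct on the number $r$ of factors, with the genuine content concentrated in the base case $r=2$; the inductive step is a formal manipulation of the maximum. Writing $G=G_1\ast H$ with $H=G_2\ast\cdots\ast G_r$ and applying the $r=2$ case together with the induction hypothesis $\reg(I(H)^{(i)})=\max\{\reg(I(G_j)^{(i')})-i'+i:1\le i'\le i,\ 2\le j\le r\}$, I substitute into $\reg(I(H)^{(i)})-i+s$; the terms $+i$ and $-i$ cancel, the constraint $i'\le i\le s$ collapses to $1\le i'\le s$, and the nested maxima reduce to the stated expression. So I focus on $r=2$, writing $\m_1,\m_2$ for the two vertex ideals and $\m_j'=\m_{3-j}$ for the ideal generated by the variables on the other side.

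First I would pin down the primary structure. Since every vertex of $G_1$ is joined to every vertex of $G_2$, a minimal vertex cover of $G$ omits vertices from at most one side; hence the minimal vertex covers are exactly $X_2\cup C_1$ and $X_1\cup C_2$, where $C_j$ ranges over the minimal vertex covers of $G_j$. Setting $P_j=I(G_j)+\m_j'$, this says $\ass(I(G))=\ass(P_1)\cup\ass(P_2)$, and therefore $I(G)^{(s)}=\bigcap_{\mathfrak p\in\ass(I(G))}\mathfrak p^{\,s}=P_1^{(s)}\cap P_2^{(s)}$. Because $\m_j'$ is a prime generated by variables disjoint from those occurring in $I(G_j)$, the standard formula for symbolic powers of a sum of ideals in disjoint variables gives $P_j^{(s)}=\sum_{i=0}^{s}I(G_j)^{(i)}(\m_j')^{\,s-i}$, with $I(G_j)^{(0)}=S$; equivalently $P_j^{(s)}=I(G_j)^{(s)}+\m_j'\,P_j^{(s-1)}$.

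Next I would compute $\reg(P_j^{(s)})$. Using $\reg((\m_j')^{t})=t$ together with the regularity formula for symbolic powers of sums of ideals in disjoint variables, I obtain
$$\reg(P_j^{(s)})=\max_{0\le i\le s}\bigl\{\reg(I(G_j)^{(i)})+(s-i)\bigr\}=\max_{1\le i\le s}\bigl\{\reg(I(G_j)^{(i)})-i+s\bigr\},$$
the $i=0$ term contributing only $s$ and being absorbed once $G_j$ has an edge. The point I would isolate here is the bound $\reg(P_j^{(s)})\ge 2s$: an edge $\{x,x'\}$ of $G_j$ forces $(xx')^{s}\in I(G_j)^{(s)}$ as a minimal generator, so $\reg(I(G_j)^{(s)})\ge 2s$, whence the $i=s$ term already reaches $2s$. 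This is exactly what makes the contribution of Lemma~\ref{ArtinianReg} harmless.

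Finally I would glue the two sides with the short exact sequence $0\to P_1^{(s)}\cap P_2^{(s)}\to P_1^{(s)}\oplus P_2^{(s)}\to P_1^{(s)}+P_2^{(s)}\to 0$. Lemma~\ref{ArtinianReg}, applied with $I=I(G_1)$ and $J=I(G_2)$, identifies $P_1^{(s)}+P_2^{(s)}$ as the ideal treated there and gives $\reg(P_1^{(s)}+P_2^{(s)})=2s-1$. The standard regularity estimates for a short exact sequence then yield $\reg(I(G)^{(s)})\le\max\{\reg(P_1^{(s)}),\reg(P_2^{(s)}),\,2s\}$ and $\max\{\reg(P_1^{(s)}),\reg(P_2^{(s)})\}\le\max\{\reg(I(G)^{(s)}),\,2s-1\}$; since $\max_j\reg(P_j^{(s)})\ge 2s$, both the $2s$ and the $2s-1$ are absorbed, forcing $\reg(I(G)^{(s)})=\max_j\reg(P_j^{(s)})$, which is the claimed formula by the previous paragraph. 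The step I expect to be most delicate is the exact evaluation of $\reg(P_j^{(s)})$, where the clean value $\max_{i+j=s}\{\reg(I(G_j)^{(i)})+j\}$ requires careful bookkeeping of the generators $I(G_j)^{(i)}(\m_j')^{s-i}$ and the disjoint-variable structure (which I would either cite or reprove by induction on $s$ via $P_j^{(s)}=I(G_j)^{(s)}+\m_j'P_j^{(s-1)}$); one should also record the degenerate case in which some $G_j$ is edgeless, where $\reg(P_j^{(s)})=s$ and the corresponding term simply drops out of the maximum.
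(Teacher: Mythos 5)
Your proposal is correct and takes essentially the same route as the paper: induction on $r$ with the content in the case $r=2$, the short exact sequence $0 \to P_1^{(s)} \cap P_2^{(s)} \to P_1^{(s)} \oplus P_2^{(s)} \to P_1^{(s)} + P_2^{(s)} \to 0$, the formula of [HTT, Proposition 4.10] for $\reg(P_j^{(s)})$, and Lemma \ref{ArtinianReg} to show the sum's regularity ($=2s-1$) is dominated, forcing $\reg(I(G)^{(s)})=\max_j\reg(P_j^{(s)})$. The only difference is that you make explicit some steps the paper leaves implicit, namely the identification $I(G)^{(s)}=P_1^{(s)}\cap P_2^{(s)}$ via minimal vertex covers of the join, the minimal-generator argument for $\reg(P_j^{(s)})\geq 2s$, and the degenerate edgeless case.
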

\begin{proof}
    We prove the assertion by induction on $r$. If $r=2$, then
    consider the following short exact sequence:
    $$
    0\rightarrow I(G)^{(s)}\rightarrow ( I(G_1)+\m_2)^{(s)}\oplus ( I(G_2)+\m_1)^{(s)}\rightarrow ( I(G_1)+\m_2)^{(s)}+( I(G_2)+\m_1)^{(s)}\rightarrow 0.
    $$
    It follows from \cite[Proposition 4.10]{HTT} that $$\reg(( I(G_2)+\m_1)^{(s)})=\max\{\reg(I(G_2)^{(i)})-i+s:1\leq i\leq s \} \geq 2s.$$ Similarly, $$\reg( ( I(G_1)+\m_2)^{(s)})=\max\{\reg(I(G_1)^{(i)})-i+s:1\leq i\leq s \}\geq 2s.$$
    By virtue of Lemma \ref{ArtinianReg}, $$\reg\left( ( I(G_1)+\m_2)^{(s)}+( I(G_2)+\m_1)^{(s)}\right)<\max\{\reg(( I(G_1)+\m_2)^{(s)}),\reg(( I(G_2)+\m_1)^{(s)}) \}, $$ and hence, it follows from the regularity behavior on short exact sequence that $$\reg\left(I(G)^{(s)}\right)=\max\left\{ \reg\left(( I(G_1)+\m_2)^{(s)}\right),\reg\left(
    ( I(G_2)+\m_1)^{(s)}\right)\right\}.$$
    Now, the assertion  follows from \cite[Proposition 4.10]{HTT}. Assume that $r>2$, and let $G'=G_1*\cdots *G_{r-1}$. Note that $G=G'*G_r$. Hence, the result follows from induction hypothesis and the base case.
\end{proof}
In \cite{Se}, Selvaraja defined following classes of graphs:
$$\mathcal{A}=\{G \mid \reg\left(I(G)^{s+1}:u\right)\leq \reg\left(I(G)\right), u\in \mathcal{G}(I(G)^s), s\geq 1 \}, $$ where $\mathcal{G}(I(G)^s)$ denotes the minimal monomial generating set of $I(G)^s$, and $$\mathcal{A}_1=\{G \mid \reg\left(I(G)^s\right)=2s+\reg(I(G))-2, s\geq 1\}.$$ 
It follows from \cite[Theorem 4.4, Remark 4.10]{Se} that $\mathcal{A}$ as well as $\mathcal{A}\cap\mathcal{A}_1$ are closed under the operation of join of graphs. We study the regularity of symbolic power of join of  graphs which belong to  class $\mathcal{A} \cap\mathcal{A}_1$. There are many classes of graphs which are contained in $\mathcal{A} \cap\mathcal{A}_1$.  For example, co-chordal graphs, unmixed bipartite, bipartite $P_6$-free graphs, forest graphs are contained in $\mathcal{A}\cap\mathcal{A}_1$, for more details see \cite{Se}. It follows from \cite[Corollary 4.6]{JS2018} that if $G$ is a chordal graph, then $G \in \mathcal{A}_1$. Also, it follows from the proof of \cite[Theorems 4.1, 4.4]{JS2018} that $G \in \mathcal{A}$ if $G$ is a chordal graph.
\begin{corollary}\label{regLemA1}
    Let $G_1,\dots ,G_r\in\mathcal{A}_1$ such that $G_i$'s are bipartite or chordal and $G=G_1\ast\dots\ast G_r$. Then, $\reg\left(I(G)^{(s)}\right) = 2s+\reg(I(G))-2, \text{ for all } s \geq 1.$ Furthermore, if $G_i\in \mathcal{A}$ for all $i$,  then $\reg\left(I(G)^{(s)}\right)=\reg(I(G)^s), \text{ for all } s \geq 1$. In particular, $\reg(I(G)^{(s)}) =2s$ if $G$ is a complete multipartite graph.
\end{corollary}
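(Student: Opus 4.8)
The plan is to evaluate the maximum in Theorem~\ref{symRegLem} by first computing each factor's contribution. First I would show that for every factor one has $\reg\left(I(G_i)^{(t)}\right)=2t+\reg(I(G_i))-2$ for all $t\geq 1$. When $G_i$ is bipartite, its edge ideal is normally torsion-free, so $I(G_i)^{(t)}=I(G_i)^{t}$, and the displayed equality is exactly the hypothesis $G_i\in\mathcal{A}_1$. When $G_i$ is chordal, Conjecture~\ref{conj} holds by Fakhari \cite{Seyed-chordal}, giving $\reg\left(I(G_i)^{(t)}\right)=\reg\left(I(G_i)^{t}\right)$, and again $G_i\in\mathcal{A}_1$ rewrites the right-hand side as $2t+\reg(I(G_i))-2$. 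Thus in either case the regularity of the $t$-th symbolic power of each factor is known explicitly.

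Next I would substitute these values into Theorem~\ref{symRegLem}. For fixed $j$ the quantity $\reg\left(I(G_j)^{(i)}\right)-i+s=\bigl(2i+\reg(I(G_j))-2\bigr)-i+s=i+s+\reg(I(G_j))-2$ is increasing in $i$, so its maximum over $1\leq i\leq s$ is attained at $i=s$ and equals $2s+\reg(I(G_j))-2$; taking the maximum over $j$ gives $\reg\left(I(G)^{(s)}\right)=2s+\max_{j}\reg(I(G_j))-2$. To replace $\max_{j}\reg(I(G_j))$ by $\reg(I(G))$ I would specialize the same formula to $s=1$: since edge ideals are radical, $I(G)^{(1)}=I(G)$, so Theorem~\ref{symRegLem} yields $\reg(I(G))=\max_{j}\reg(I(G_j))$. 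Combining the two identities proves the first assertion $\reg\left(I(G)^{(s)}\right)=2s+\reg(I(G))-2$. For the second assertion, if in addition every $G_i\in\mathcal{A}$, then the join-closure of $\mathcal{A}\cap\mathcal{A}_1$ recorded above (from \cite[Theorem 4.4, Remark 4.10]{Se}) gives $G\in\mathcal{A}_1$, whence $\reg(I(G)^{s})=2s+\reg(I(G))-2$; comparing with the first assertion shows $\reg\left(I(G)^{(s)}\right)=\reg(I(G)^{s})$.

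For the complete multipartite case $G=K_{n_1,\dots,n_r}$ I would realize $G$ as a join of complete bipartite graphs by pairing the parts, $G=K_{n_1,n_2}\ast K_{n_3,n_4}\ast\cdots$, where each factor $K_{n_{2l-1},n_{2l}}$ is bipartite with chordal complement, hence lies in $\mathcal{A}\cap\mathcal{A}_1$ and has $\reg(I(K_{n_{2l-1},n_{2l}}))=2$; since $G^{c}$ is a disjoint union of cliques, also $\reg(I(G))=2$ by the criterion of \Froberg, so the first two assertions give $\reg\left(I(G)^{(s)}\right)=\reg(I(G)^{s})=2s$. The point requiring care, and the main obstacle, is an odd number of parts, where the pairing leaves a single edgeless factor $\overline{K_{n_r}}$ with zero edge ideal. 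Here I would argue through $G=K_{n_1,\dots,n_{r-1}}\ast\overline{K_{n_r}}$, using the even case for the edge-bearing factor $K_{n_1,\dots,n_{r-1}}$: the proof of Theorem~\ref{symRegLem} remains valid with one edgeless factor, because that factor contributes only $\reg(\m^{s})=s$ while $K_{n_1,\dots,n_{r-1}}$ contributes $2s$, and Lemma~\ref{ArtinianReg} still forces the regularity of the sum down to $2s-1<2s$, so the short exact sequence again identifies $\reg\left(I(G)^{(s)}\right)$ with the dominant value $2s$.
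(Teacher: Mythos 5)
Your argument is correct, and its skeleton is the same as the paper's: establish $\reg\left(I(G_i)^{(t)}\right)=2t+\reg(I(G_i))-2$ for each factor, note this is increasing in $t$ so the maximum in Theorem \ref{symRegLem} is attained at $i=s$, and deduce the ``furthermore'' statement from the join-closure of $\mathcal{A}\cap\mathcal{A}_1$ in \cite{Se}. You diverge in three places, and two of the divergences are improvements. (i) For chordal factors the paper quotes the explicit formula $\reg\left(I(G_i)^{(s)}\right)=2s+\nu(G_i)-1$ from \cite[Theorem 3.3]{Seyed-chordal}, silently also using $\reg(I(G_i))=\nu(G_i)+1$; you instead invoke the chordal case of Conjecture \ref{conj} together with the $\mathcal{A}_1$ hypothesis. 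These are the same two ingredients packaged differently, since the chordal case of the conjecture is precisely Fakhari's formula combined with $\reg(I^s)=2s+\reg(I)-2$ (\cite[Corollary 4.6]{JS2018}), so this is a citation-level difference only. (ii) To convert $\max_j\reg(I(G_j))$ into $\reg(I(G))$ the paper cites \cite[Proposition 3.1.2]{Am}, whereas you specialize Theorem \ref{symRegLem} to $s=1$ and use $I(G)^{(1)}=I(G)$; your route is self-contained and equally valid. (iii) Most substantively, you actually prove the complete multipartite claim, which the paper asserts with no argument at all; and indeed the corollary cannot be applied verbatim to the natural decomposition of $K_{n_1,\dots,n_r}$ into edgeless factors, since an edgeless graph has zero edge ideal and so does not meaningfully satisfy the defining condition of $\mathcal{A}_1$, while for odd $r$ any pairing leaves either an edgeless factor or a complete tripartite factor that is neither bipartite nor chordal. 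Your repair --- pair the parts into complete bipartite factors (bipartite, co-chordal, hence in $\mathcal{A}\cap\mathcal{A}_1$ with regularity $2$), and for odd $r$ rerun the proof of Theorem \ref{symRegLem} with one edgeless factor, observing that Lemma \ref{ArtinianReg} still holds when $J=0$ and that the ideal coming from the edgeless factor is just the $s$-th power of the prime generated by the other factor's variables, of regularity $s<2s$, so the short exact sequence still returns the dominant value $2s$ --- is sound, and it fills a genuine gap that the paper glosses over.
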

\begin{proof}
 If $G_i$ is  bipartite and $G_i \in \mathcal{A}_1$, then we have $$\reg\left(I(G_i)^{(s)}\right)=\reg(I(G_i)^s)<\reg\left(I(G_i)^{s+1}\right)=\reg\left(I(G_i)^{(s+1)}\right).$$ If $G_i$ is a chordal graph, then by  \cite[Theorem 3.3]{Seyed-chordal}, $\reg(I(G_i)^{(s)}) =2s + \nu(G_i)-1$. Therefore, by virtue of Theorem \ref{symRegLem}, we get $\reg\left(I(G)^{(s)}\right)=\max\left\{\reg\left(I(G_i)^{(s)}\right):1\leq i \leq r \right\}.$ Since for all $1\leq i\leq r$, $\reg(I(G_i)^{(s)})=2s+\reg(I(G_i))-2$, it follows from \cite[Proposition 3.1.2]{Am} that $$\reg\left(I(G)^{(s)}\right)=\max\left\{\reg\left(I(G_i)\right):1\leq i \leq r \right\}+2s-2=2s+\reg(I(G))-2.$$  Now, assume that each $G_i\in \mathcal{A}$. By \cite[Remark 4.10]{Se}, $\reg\left(I(G)^{(s)}\right)=\reg\left(I(G)^{s}\right)$ which completes the proof.
\end{proof}

In \cite{Se}, Selvaraja studied the regularity of powers of edge ideals of wheel graphs. In the following result, we prove Conjecture \ref{conj} for wheel graphs. 
\begin{corollary}
    Let $G=W_n$ be a wheel graph for $n\geq 4$. Then, $$\reg(I(G)^{(s)})=\reg(I(G)^s)=2s+\nu(C_n)-1 \text{ for all } s\geq 2.$$
\end{corollary}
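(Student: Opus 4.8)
The plan is to realise the wheel as a join and transport the problem to a single cycle via Theorem \ref{symRegLem}. I would first write $W_n = C_n \ast K_1$, where $K_1$ is the hub vertex, and apply Theorem \ref{symRegLem} with $G_1 = C_n$ and $G_2 = K_1$. Since $I(K_1) = 0$, the single-vertex factor contributes only the ideal $(I(K_1) + \m_1)^{(s)} = \m_1^s$ (here $\m_1 = \langle V(C_n)\rangle$), whose regularity is $s$; as this is dominated by the cycle contribution (which is at least $2s$), the maximum in Theorem \ref{symRegLem} is governed entirely by $C_n$, giving $$\reg(I(W_n)^{(s)}) = \max\{\reg(I(C_n)^{(i)}) - i + s : 1 \le i \le s\}.$$

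Next I would assemble the regularity of the symbolic powers of a cycle. For even $n$ the cycle is bipartite, so $I(C_n)^{(i)} = I(C_n)^i$ and the two regularities agree; for odd $n$ the equality $\reg(I(C_n)^{(i)}) = \reg(I(C_n)^i)$ is precisely what Gu et al.\ establish in \cite{GHJsymbo}. Feeding in the known regularity of ordinary powers of cycles (Beyarslan, H\`a and Trung), namely $\reg(I(C_n)^i) = 2i + \nu(C_n) - 1$ for $i \ge 2$ together with $\reg(I(C_n)) = \reg(S/I(C_n)) + 1$, where $\reg(S/I(C_n))$ equals $\nu(C_n)$ when $n \not\equiv 2 \pmod 3$ and equals $\nu(C_n) + 1$ when $n \equiv 2 \pmod 3$, I obtain explicit values of $\reg(I(C_n)^{(i)})$ for every $i$.

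Then I would evaluate the maximum. Writing $T_i = \reg(I(C_n)^{(i)}) - i + s$, for $2 \le i \le s$ one finds $T_i = i + \nu(C_n) - 1 + s$, which increases with $i$ and is largest at $T_s = 2s + \nu(C_n) - 1$, while $T_1 = \nu(C_n) + s$ (resp.\ $\nu(C_n) + 1 + s$) if $n \not\equiv 2$ (resp.\ $n \equiv 2$) modulo $3$. Comparing these, $T_s \ge T_1$ exactly when $s \ge 2$, so for $s \ge 2$ the maximum is $2s + \nu(C_n) - 1$, which is the claimed value of $\reg(I(W_n)^{(s)})$. The identity $\reg(I(W_n)^{(s)}) = \reg(I(W_n)^s)$ then follows either by citing Selvaraja's computation of $\reg(I(W_n)^s)$ in \cite{Se}, or by observing that the same reduction applied to ordinary powers uses the inputs $\reg(I(C_n)^{(i)}) = \reg(I(C_n)^i)$, so that the two maxima are literally equal.

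The step I expect to be most delicate is the bookkeeping of the $i = 1$ term. When $n \equiv 2 \pmod 3$ the first power of the cycle carries an extra unit of regularity, so the $i = 1$ contribution competes with the generic $i = s$ term and only loses once $s \ge 2$; this is exactly the phenomenon that forces the hypothesis $s \ge 2$, and it must be verified rather than assumed when extracting the maximum. The only other point needing care is the degenerate factor $K_1$: because its edge ideal is zero one cannot read its contribution off the bare formula of Theorem \ref{symRegLem}, but must instead argue directly, via the two-factor short exact sequence in its proof, that this factor produces $\reg(\m_1^s) = s$ and is therefore harmless.
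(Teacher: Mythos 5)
Your proposal is correct and takes essentially the same route as the paper's proof: write $W_n$ as the join of a vertex with $C_n$, apply Theorem \ref{symRegLem}, substitute the known regularities of (symbolic) powers of cycles from \cite{GHJsymbo}, and obtain the equality with ordinary powers from \cite[Theorem 4.7]{Se}. You are in fact more explicit than the paper on two points it leaves implicit, namely the degenerate factor $K_1$ (whose contribution must be handled through the short exact sequence rather than the bare formula) and the comparison of the $i=1$ term with the $i=s$ term, which is exactly where the hypothesis $s\geq 2$ enters.
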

\begin{proof}
    Since $G=W_n$ is  join of a vertex $v$ and a cycle $C_n$, by  Theorem \ref{symRegLem}, we have $\reg(I(G)^{(s)})=\max\{\reg I(C_n)^{(i)}-i+s:1\leq i\leq s\}.$ It follows from \cite[Theorem 5.3]{GHJsymbo} that for $ i\geq 2$, $\reg (I(C_n)^{(i)})=\reg(I(C_n)^i)=2i+\nu(C_n)-1$, and hence, our result follows from \cite[Theorem 4.7]{Se}.
\end{proof}

\section{Construction of Graphs}
In this section, we construct graphs whose edge ideals have different pair of algebraic invariants. Let $G$ and $H$ be graphs on vertex sets $V(G)=\{x_1,\dots, x_m\}$ and $V(H)=\{y_1,\dots, y_n\}$, respectively. One can see that if $C$ is a minimal vertex cover of $G*H$, then either $C=A\cup V(H)$ or  $C=V(G)\cup B$, where $A$ and $B$ are minimal vertex covers of $G$ and $H$, respectively. Therefore, we have the following short exact sequence
\begin{align}\label{ses}
0\rightarrow \frac{S}{I(G\ast H)}\rightarrow \frac{S}{I(G)+\m_2}\oplus \frac{S}{I(H)+\m_1}\rightarrow \sk \rightarrow 0,
\end{align}
where $\m_1=\langle x_1,\dots, x_m\rangle$ and $\m_2=\langle y_1,\dots, y_n\rangle$ and $S=\sk[x_1,\dots,x_m,y_1,\dots,y_n].$ Thus, by depth lemma,  $\depth(S/I(G\ast H))=1$.

Note that for a square-free monomial ideal $I$, by \cite[Observation 5.2]{KKSS}, $e(S/I)=|\Minh(I)|$, where $\Minh(I)=\{\mathfrak{p} \in \ass(S/I) : \hgt(\mathfrak{p})=\hgt(I)\}$. We now compute the multiplicity of join of graphs.

\begin{theorem}\label{JoinMultThm}
    Let $G$ and $H$ be graphs on vertex sets $V(G)=\{x_1,\ldots,x_m\}$ and $V(H)=\{y_1,\ldots,y_n\}$, respectively. Suppose $S=\sk[x_1,\dots,x_m,y_1,\dots,y_n]$. Then,     
    \[
    e(S/I(G\ast H))=\left\{ \begin{array}{cc}
    e(S/I(G))+e(S/I(H))&\text{ if } m+\hgt(I(H))=n+\hgt(I(G)),\\
    e(S/I(G))&\text{ if } m+\hgt(I(H))>n+\hgt(I(G)),\\
    e(S/I(H))&\text{ if } m+\hgt(I(H))<n+\hgt(I(G)).
    \end{array} \right.\]
\end{theorem}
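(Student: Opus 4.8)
The plan is to combine the identity $e(S/I)=|\Minh(I)|$ for square-free monomial ideals, recalled just before the statement, with the explicit list of minimal vertex covers of $G\ast H$ given above. First I would recall the primary decomposition of an edge ideal: for any graph $\Gamma$ the minimal primes of $I(\Gamma)$ are exactly the monomial primes $P_C=\langle x_i:x_i\in C\rangle$, one for each minimal vertex cover $C$ of $\Gamma$, and $\hgt(P_C)=|C|$. Hence $\hgt(I(\Gamma))$ is the smallest cardinality of a vertex cover, and $|\Minh(I(\Gamma))|$ is the number of \emph{minimum} (i.e.\ smallest) vertex covers of $\Gamma$.

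Next I would pass from covers to primes for $G\ast H$. Since every minimal vertex cover of $G\ast H$ is either $A\cup V(H)$ with $A$ a minimal vertex cover of $G$, or $V(G)\cup B$ with $B$ a minimal vertex cover of $H$, the minimal primes of $I(G\ast H)$ are precisely the ideals $P_A+\m_2$, of height $|A|+n$, and $\m_1+P_B$, of height $m+|B|$. Minimising over all covers gives $\hgt(I(G\ast H))=\min\{\,n+\hgt(I(G)),\,m+\hgt(I(H))\,\}$.

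It then remains to count the minimal primes of this minimal height. A prime of the first type lies in $\Minh(I(G\ast H))$ if and only if $|A|=\hgt(I(G))$ and $n+\hgt(I(G))\le m+\hgt(I(H))$; when the inequality holds these primes are in bijection with the minimum vertex covers of $G$, so they number $|\Minh(I(G))|=e(S/I(G))$, and otherwise there are none. Symmetrically, primes of the second type contribute $e(S/I(H))$ exactly when $m+\hgt(I(H))\le n+\hgt(I(G))$, and nothing otherwise. Separating the three cases according to whether $m+\hgt(I(H))$ equals, exceeds, or is smaller than $n+\hgt(I(G))$, and adding the two contributions, gives the three displayed formulas. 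I do not expect a genuine obstacle; the one point that needs care is the distinction between a minimal vertex cover (minimal under inclusion) and a minimum vertex cover (of least cardinality), since $\Minh$ sees only the latter, together with the verification that in the balanced case the two disjoint families of minimum covers really do add up to $e(S/I(G))+e(S/I(H))$.
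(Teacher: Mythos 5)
Your proposal is correct and follows essentially the same route as the paper: both use the identity $e(S/I)=|\Minh(I)|$ from \cite[Observation 5.2]{KKSS}, the correspondence between minimal primes of an edge ideal and minimal vertex covers, and the description of minimal vertex covers of $G\ast H$ as $A\cup V(H)$ or $V(G)\cup B$, then count the primes of minimal height in the three cases. Your write-up is simply a more detailed version of the paper's argument, making explicit the height computation $\hgt(I(G\ast H))=\min\{n+\hgt(I(G)),\,m+\hgt(I(H))\}$ and the minimal-versus-minimum distinction that the paper leaves implicit.
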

\begin{proof}
    For a graph $G$, it is well known that minimal primes of $I(G)$ correspond to minimal vertex covers of $G$. Note that if $\mathfrak{p}\in \Minh(I(G\ast H))$, then either $\mathfrak{p}=\mathfrak{p}_1+\m_2$ or $\mathfrak{p}=\mathfrak{p}_2+\m_1$, where $\mathfrak{p}_1\in \Minh(I(G))$ and $\mathfrak{p}_2\in \Minh(I(H))$.
    Thus, we get
    \[
    |\Minh(I(G\ast H))|=\left\{ \begin{array}{cc}
    |\Minh(I(G))|+|\Minh(I(H))|&\text{ if } m+\hgt(I(H))=n+\hgt(I(G)),\\
    |\Minh(I(G))|&\text{ if } m+\hgt(I(H))>n+\hgt(I(G)),\\
    |\Minh(I(H))|&\text{ if } m+\hgt(I(H))<n+\hgt(I(G)).    \end{array} \right.\] Hence, the assertion follows.
\end{proof}
Now, we prove the algebraic properties of join of graphs which are used in rest of the section.
\begin{proposition}\label{selfjoin}
    Let $G^{*l}=G\ast \dots \ast G$ be join of $l$-copies of $G$, $S_G=\sk[V(G)]$ and $S=\sk[V(G^{*l})].$ Then, 
    \begin{enumerate}[\rm a)]
        \item $\reg(S/I(G^{*l}))=\reg(S_G/I(G))$,  
        \item $e(S/I(G^{*l}))=l\cdot e(S_G/I(G))$,
        \item $H(S/I(G^{*l}),t)=l\cdot H(S_G/I(G),t) -(l-1)$.
    \end{enumerate}
\end{proposition}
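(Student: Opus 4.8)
The plan is to induct on $l$, at each stage writing $G^{\ast l}=G^{\ast(l-1)}\ast G$ and feeding this two-fold join into the results already available for joins of two graphs. For $l=1$ we have $G^{\ast 1}=G$ and $S=S_G$, so all three statements hold trivially; throughout I therefore assume $l\geq 2$.

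Part (a) requires no induction. Applying Theorem \ref{symRegLem} with $s=1$ to the $l$-fold join (all factors equal to $G$) gives, since $I^{(1)}=I$,
$$\reg\left(I(G^{\ast l})\right)=\max\left\{\reg(I(G)):1\leq j\leq l\right\}=\reg(I(G)).$$
Because the regularity of a homogeneous ideal is unchanged under extension of the ambient polynomial ring by new variables (flat base change preserves the graded Betti numbers), $\reg_S(I(G)S)=\reg_{S_G}(I(G))$, and subtracting one from both sides via $\reg(S/I)=\reg(I)-1$ yields (a).

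For part (c) I would use the short exact sequence \eqref{ses} together with additivity of the Hilbert series on short exact sequences of finitely generated graded modules. Applied to $G\ast H$ and using $H(\sk,t)=1$, this gives
$$H(S/I(G\ast H),t)=H(S/(I(G)+\m_2),t)+H(S/(I(H)+\m_1),t)-1.$$
The key observation is that $S/(I(G)+\m_2)\cong S_G/I(G)$ and $S/(I(H)+\m_1)\cong \sk[V(H)]/I(H)$ as graded rings, so the right-hand side equals $H(S_G/I(G),t)+H(\sk[V(H)]/I(H),t)-1$. Taking $H=G$ and applying this with $G$ replaced by $G^{\ast(l-1)}$, the inductive hypothesis $H(S'/I(G^{\ast(l-1)}),t)=(l-1)H(S_G/I(G),t)-(l-2)$ substitutes in to produce $l\cdot H(S_G/I(G),t)-(l-1)$, which is (c).

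For part (b) I would induct using Theorem \ref{JoinMultThm}, after first recording the height. From the description of the minimal vertex covers of a join one has $\hgt(I(G'\ast G))=\min\{\hgt(I(G'))+|V(G)|,\,|V(G')|+\hgt(I(G))\}$, whence by induction $\hgt(I(G^{\ast l}))=\hgt(I(G))+(l-1)|V(G)|$. Writing $m=|V(G)|$ and $c=\hgt(I(G))$, at the step $G^{\ast l}=G^{\ast(l-1)}\ast G$ one checks $|V(G^{\ast(l-1)})|+c=(l-1)m+c=m+\hgt(I(G^{\ast(l-1)}))$, so we land precisely in the first (equal-heights) branch of Theorem \ref{JoinMultThm}, giving the recursion $e(S/I(G^{\ast l}))=e(S/I(G^{\ast(l-1)}))+e(S_G/I(G))$ and hence (b). Alternatively, (b) can be read off from (c): the dimension is preserved, since $\dim(S/I(G^{\ast l}))=lm-\hgt(I(G^{\ast l}))=m-c=\dim(S_G/I(G))$, and this common value is at least $1$ because an edge ideal is never $\m$-primary; clearing denominators in (c) then identifies the $h$-polynomial, and evaluating at $t=1$ kills the correction term $(l-1)(1-t)^{m-c}$. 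The steps are individually routine; the one point demanding care is the height bookkeeping, namely verifying that each inductive step stays in the equal-heights branch of Theorem \ref{JoinMultThm} (equivalently, that the dimension of the self-join does not grow with $l$), after which everything is direct substitution into the two-graph formulas.
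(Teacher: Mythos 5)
Your proof is correct, and it overlaps with the paper's only in part. The paper disposes of (a) and (c) by citing Mousivand \cite[Proposition 3.12, Corollary 4.6]{Am}, whereas you derive both internally: (a) by specializing Theorem \ref{symRegLem} to $s=1$ (using that $I(G)^{(1)}=I(G)$ since edge ideals are radical, plus invariance of regularity under adjoining variables), and (c) from the short exact sequence \eqref{ses} together with additivity of Hilbert series on short exact sequences and induction on $l$. This makes the proposition self-contained within the paper at the cost of a slightly longer argument; the paper's route is shorter but outsources the work. For (b) you do exactly what the paper does --- recursively apply Theorem \ref{JoinMultThm} --- but you supply the one detail the paper leaves unstated and that genuinely needs checking: that at each inductive step the heights balance, i.e. $|V(G^{\ast(l-1)})|+\hgt(I(G))=|V(G)|+\hgt(I(G^{\ast(l-1)}))$, so one always lands in the additive branch of Theorem \ref{JoinMultThm}; your computation $\hgt(I(G^{\ast l}))=\hgt(I(G))+(l-1)|V(G)|$ is precisely the required bookkeeping. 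Your alternative derivation of (b) from (c) --- noting that $\dim(S/I(G^{\ast l}))=\dim(S_G/I(G))\geq 1$, so the correction term $(l-1)(1-t)^{\dim}$ vanishes when the $h$-polynomial is evaluated at $t=1$ --- is also sound and gives a nice consistency check that the paper does not contain.
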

\begin{proof}
    (a)     and (c) follow from \cite[Proposition 3.12, Corollary 4.6]{Am}, respectively. (b) can be obtained by recursively applying Theorem \ref{JoinMultThm}. 
\end{proof}

Using above result, one can see that $e(S/I(K_n))=n$, and using \eqref{ses}, $\depth(S/I(K_n))=1$.
\begin{minipage}{\linewidth}
    \begin{minipage}{0.5\linewidth}
\noindent\begin{notation}            Let $K_n$ be the complete graph on the vertex set $\{x_1,\dots, x_n\}$. Then, for $1 \leq r \leq n$, we define a graph $W(n,r)$ on the vertex set $\{x_1,\dots,x_n,y_1,\dots,y_r\}$ with edge set $E(K_n)\cup\{\{x_i,y_i\}:1\leq i \leq r\}$. The graph shown in Figure \ref{W} is $W(5,3)$.
            \end{notation}
    \end{minipage}
    \begin{minipage}{0.55\linewidth}
        \hspace*{8mm}    
\begin{figure}[H]
 \begin{tikzpicture}[scale=1]
    \draw (-2,2)-- (0,2);
    \draw (-1,3)-- (-2,2);
    \draw (-2,2)-- (-2,1);
    \draw (-2,1)-- (0,1);
    \draw (0,1)-- (0,2);
    \draw (0,2)-- (-1,3);
    \draw (-1,3)-- (-2,1);
    \draw (-2,1)-- (0,2);
    \draw (-1,3)-- (0,1);
    \draw (0,1)-- (-2,2);
    \draw (-2,2)-- (-2,3);
    \draw (0,2)-- (0,3);
    \draw (-1,3)-- (-1,4);
    \begin{scriptsize}
    \fill  (-2,3) circle (1.5pt);
    \fill  (-1,3) circle (1.5pt);
    \fill  (0,1) circle (1.5pt);
    \fill  (-2,2) circle (1.5pt);
    \fill  (0,2) circle (1.5pt);
    \fill  (-2,1) circle (1.5pt);
    \fill  (0,3) circle (1.5pt);
    \fill  (-1,4) circle (1.5pt);
    \end{scriptsize}
    \end{tikzpicture}
    \caption{}\label{W}
   \end{figure}
    \end{minipage}
\end{minipage}
Now, we study the algebraic invariants of $W(n,r)$.
    \begin{theorem}\label{linearWhiskerLem}
        Let $G=W(n,r)$ and $S=\sk[V(G)]$. Then, we have the following:
        \begin{enumerate}[\rm a)]
            \item $I(G)$ has  linear resolution,
            \item $\pd(S/I(G))=n$ and $\depth(S/I(G))=r$,
            \item if $1 \leq r <n$, then $e(S/I(G))=n-r$.
        \end{enumerate}
    \end{theorem}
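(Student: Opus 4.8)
Throughout I would work with the complement graph $G^c$, which has a transparent description on the vertex set $\{x_1,\dots,x_n,y_1,\dots,y_r\}$: the $x_i$ form an independent set, the $y_j$ form a clique, and $\{x_i,y_j\}\in E(G^c)$ precisely when $i\neq j$. For part (a), I would invoke the theorem of \Froberg \cite{Mon} that $I(G)$ has a linear resolution if and only if $G^c$ is chordal, reducing the task to showing $G^c$ has no induced cycle of length $\ge 4$. Such a cycle can contain at most two of the vertices $y_j$ (three pairwise-adjacent $y$'s would supply a chordal triangle), while its $x$-vertices are pairwise non-adjacent and so form an independent set of the cycle, numbering at most $\lfloor k/2\rfloor$ in a $k$-cycle; together these bounds force $k\le 4$, and in the surviving case $k=4$ (an alternating $x,y,x,y$ cycle) the edge joining the two opposite $y$'s is a chord. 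Hence $G^c$ is chordal and (a) follows.

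For part (b), the two assertions are equivalent via the Auslander--Buchsbaum formula $\depth(S/I(G))+\pd(S/I(G))=\dim S=n+r$, so it is enough to prove $\pd(S/I(G))=n$. Because $I(G)$ has a linear resolution by (a), Hochster's formula \cite{Mon} collapses onto the linear strand: with $\Delta$ the independence complex of $G$ (the Stanley--Reisner complex of $I(G)$), one has $\beta_{i,j}(S/I(G))=0$ unless $j=i+1$, and $\beta_{i,i+1}(S/I(G))=\sum_{|W|=i+1}\dim_{\sk}\widetilde H_0(\Delta|_W)$. Since the $1$-skeleton of $\Delta|_W$ is exactly $G^c|_W$, this summand is nonzero precisely when $G^c|_W$ is disconnected, and therefore
\[\pd(S/I(G))=\max\{\,|W|-1 : G^c|_W \text{ is disconnected}\,\}.\]
I would then evaluate this maximum as $n$ by a two-sided argument. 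The set $W=\{x_1,\dots,x_n,y_1\}$ induces a disconnected subgraph of $G^c$, since its only $y$-vertex $y_1$ is a non-neighbor of $x_1$, leaving $x_1$ isolated; this produces a disconnecting set of size $n+1$ (here $r\ge 1$ is used). Conversely, any $W$ with $|W|\ge n+2$ must contain at least two of the $y_j$, and then every $x_i\in W$ is adjacent in $G^c$ to some $y_j\in W$ with $j\neq i$, so $G^c|_W$ is connected. Hence the maximum equals $n+1$, giving $\pd(S/I(G))=n$ and $\depth(S/I(G))=r$. This evaluation of the projective dimension is the technical heart of the statement; the other two parts are comparatively routine.

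For part (c), I would use the identity $e(S/I(G))=|\Minh(I(G))|$ recorded before the theorem, so the problem becomes counting the minimal vertex covers of $G$ of least cardinality. Covering the edges of the clique $K_n$ forces any vertex cover to omit at most one of the $x_i$, whence $\hgt(I(G))=n-1$; a cover of size exactly $n-1$ therefore consists of $n-1$ of the $x_i$ and no $y_j$, and the set $\{x_i:i\neq k\}$ covers the whisker edge $\{x_k,y_k\}$ only when $k>r$. Thus the minimal vertex covers of minimum size are exactly the sets $\{x_i:i\neq k\}$ with $r<k\le n$, and there are $n-r$ of them (the hypothesis $r<n$ ensuring at least one), so $e(S/I(G))=n-r$. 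It remains only to confirm that each such set is a genuine minimal vertex cover, which is immediate since $x_i$ is the unique cover of the edge $\{x_i,x_k\}$.
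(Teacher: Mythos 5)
Your proof is correct, but apart from part (a) it takes a genuinely different route from the paper's. For (a) both arguments are the same---Fr\"{o}berg's theorem applied to the co-chordal graph $W(n,r)$---except that you spell out the chordality of $G^c$, which the paper leaves as ``easy to see''. For (b) and (c) the paper argues by induction on $n$ via the short exact sequence $0\to S/(I(G):x_1)\to S/I(G)\to S/(I(G)+\langle x_1\rangle)\to 0$, using that $I(G):x_1=\langle x_2,\dots,x_n,y_1\rangle$ is a complete intersection and that $I(G)+\langle x_1\rangle=I(W(n-1,r-1))+\langle x_1\rangle$ with $y_1$ a regular element: the depth lemma then yields $\depth(S/I(G))=r$, and a comparison of dimensions of the three modules in the same sequence yields $e(S/I(G))=e(S/(I(G)+\langle x_1\rangle))$, closing the induction for (c). You avoid induction entirely. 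In (b) you combine Auslander--Buchsbaum with Hochster's formula: the linear resolution from (a) confines all nonzero Betti numbers of $S/I(G)$ to the linear strand, so that $\pd(S/I(G))=\max\{|W|-1 : G^c|_W \text{ is disconnected}\}$, which you then evaluate by a correct two-sided estimate (the set $\{x_1,\dots,x_n,y_1\}$ isolates $x_1$ and gives the lower bound $n$; any $W$ with $|W|\ge n+2$ contains two $y_j$'s and is forced to be connected, giving the upper bound). Note that this makes your (b) depend on (a), which the paper's proof does not; in exchange it isolates a reusable combinatorial principle for edge ideals with linear resolution. In (c) you count the minimum-cardinality minimal vertex covers directly and invoke $e(S/I(G))=|\Minh(I(G))|$, rather than running the exact-sequence induction; this is more elementary and pinpoints exactly where the hypothesis $r<n$ enters (it guarantees a vertex cover of size $n-1$, hence $\hgt(I(G))=n-1$). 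The only stylistic slip is in (c): you assert $\hgt(I(G))=n-1$ before exhibiting a cover of that size (the clique argument alone gives only $\hgt(I(G))\ge n-1$), but since the required covers $\{x_i : i\neq k\}$, $r<k\le n$, appear one sentence later, the argument is complete.
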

\begin{proof} 
(a) It is easy to see that $W(n,r)$ is a co-chordal graph. Hence, the assertion follows from \cite[Theorem 1]{Froberg}.
\par (b) We proceed by induction on $n$. Consider the following short exact sequence
\begin{equation*}
0\longrightarrow\dfrac{S}{I(G):x_1}\stackrel{\cdot x_1}{\longrightarrow}\dfrac{S}{I(G)}\longrightarrow\dfrac{S}{I(G)+\langle x_1 \rangle }\longrightarrow0.
\end{equation*}
Note that $I(G):x_1=\langle x_2,\ldots,x_n,y_1 \rangle $ and $ I(G)+\langle x_1 \rangle =I(W(n-1,r-1))+\langle x_1 \rangle $. Since $I(G):x_1$ is generated by a regular sequence of length $n$, $\pd( S/( I(G):x_1 ) )=n$ and $\depth(S/(I(G):x_1))=r$. Observe that $y_1$ is a regular element on $S/( I(G)+\langle x_1 \rangle) $. 
Thus, by induction, $\depth(S/( I(G)+\langle x_1 \rangle))=r$. Now, by applying depth lemma on the above short exact sequence, we have $\depth(S/I(G))=r$.
\par (c) We use induction on $n$. Let $A$ be a minimal vertex cover of $G$. Then, $|A \cap \{x_1,\ldots,x_n\}|\geq n-1$.  Also, $\{x_1,\ldots,x_{n-1}\}$ is a minimal vertex cover of $G$. Therefore, $\hgt(I(G))=n-1$, and hence, $\dim(S/I(G))=r+1$. From the proof of (b), we have  $\dim(S/( I(G):x_1))=r$ and $\dim(S/( I(G)+\langle x_1\rangle))=r+1$. Thus, by the above short exact sequence, $e(S/I(G))=e(S/( I(G)+\langle x_1\rangle))$. Hence, the assertion follows by induction. 
\end{proof}

In the following remark, we prove that $\reg(S/I) \leq \dim(S/I)$,  if $I$ is a square-free monomial ideal.  
\begin{remark}\label{key}
    It is a  known result, but for the sake of completeness, we prove it.
    Let $I \subset S=\sk[x_1,\dots,x_n]$ be a square-free monomial ideal.
    Let $\dim(S/I)=d $. Since $I$ is a square-free monomial ideal, there exists a simplicial complex $\Delta$ on $[n]$ such that the Stanley-Reisner ideal of $\Delta$ is $I$. It follows from \cite[Section 1.5]{Mon} that $\dim(\Delta)=d-1$. Therefore, $\tilde{H}_{l}(\Delta,\sk)=0$, for $l \geq d$. By Hochster's formula \cite[Theorem 8.1.1]{Mon}, $$\beta_{i,j}(S/I)=\sum_{A \subset [n], |A|=j} \dim_{\sk} \tilde{H}_{j-i-1}(\Delta_{A},\sk).$$  Let $\reg(S/I) =r$. Then, there exists $i,j$ such that $j-i=r$ and $\beta_{i,j}(S/I) \neq 0$.  Therefore, for some subset $A \subset [n]$, $\dim_{\sk} \tilde{H}_{r-1}(\Delta_A,\sk) \neq 0$, which implies that $r-1 \leq d-1$, and hence, $\reg(S/I) \leq \dim(S/I)$.
\end{remark}
As a consequence, we have the following:
\begin{corollary}\label{key1}
    Let $G$ be a triangle free graph  which is not a forest. Then, $\reg(S/I(G^c))=2$.
\end{corollary}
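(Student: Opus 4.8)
The plan is to pin down $\reg(S/I(G^c))$ by proving the two inequalities $\reg(S/I(G^c)) \le 2$ and $\reg(S/I(G^c)) \ge 2$ separately. The upper bound will come from a dimension count fed into Remark \ref{key}, and the lower bound from \Froberg's characterization of edge ideals with linear resolution. Before either half, I would record that $G$ has at least one edge but is not complete: since $G$ is not a forest it contains a cycle, and since $G$ is triangle free it contains no $K_t$ with $t\ge 3$. In particular $G^c$ has an edge, so $I(G^c)\neq 0$ and $\reg(S/I(G^c))\ge 1$.

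For the upper bound, I would compute $\dim(S/I(G^c))$. A set of vertices is independent in $G^c$ exactly when it is a clique in $G$, so the independence number of $G^c$ equals the clique number $\omega(G)$; since $\dim(S/I(G^c))$ equals the independence number of $G^c$ (the maximal independent sets of $G^c$ being the complements of the minimal vertex covers), we get $\dim(S/I(G^c))=\omega(G)$. Because $G$ is triangle free we have $\omega(G)\le 2$, and because $G$ has an edge we have $\omega(G)=2$. Hence $\dim(S/I(G^c))=2$, and Remark \ref{key} gives $\reg(S/I(G^c))\le \dim(S/I(G^c))=2$.

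For the lower bound, I would show that $G$ is not chordal. As $G$ is not a forest it contains a cycle, and as $G$ is triangle free its girth is at least $4$; a cycle of minimum length is chordless, since any chord would produce a strictly shorter cycle, so $G$ contains an induced cycle of length at least $4$ and is therefore not chordal. By \Froberg's theorem \cite[Theorem 1]{Froberg}, $I(G^c)$ has a linear resolution if and only if $(G^c)^c=G$ is chordal. Since $G$ is not chordal and $I(G^c)\neq 0$, the ideal $I(G^c)$ does not have a linear resolution, so $\reg(S/I(G^c))\ge 2$. Combining this with the upper bound yields $\reg(S/I(G^c))=2$.

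The argument is short, so the work is really in using each hypothesis in the correct half rather than in overcoming a genuine obstacle. The point that deserves care is that the two hypotheses are both needed and are used asymmetrically: ``triangle free'' forces $\omega(G)=2$ (hence the dimension, hence the upper bound $2$) and also guarantees girth at least $4$, while ``not a forest'' guarantees both that $G^c$ has an edge and that the induced cycle witnessing non-chordality exists (hence the lower bound). The only nonobvious step is the observation that a shortest cycle is induced, which is what converts ``$G$ has a cycle of length $\ge 4$'' into ``$G$ is not chordal''; everything else is a bookkeeping of standard identities together with the two cited results.
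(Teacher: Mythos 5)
Your proof is correct and follows essentially the same route as the paper: the upper bound $\reg(S/I(G^c))\le 2$ via $\dim(S/I(G^c))=\omega(G)=2$ and Remark \ref{key}, and the lower bound via \Froberg's theorem applied to the non-chordality of $G$. The only difference is that you spell out details the paper leaves implicit (that a shortest cycle is induced, and that $G^c$ has an edge), which is a welcome amount of care but not a different argument.
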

\begin{proof}
    It is a  well known fact that $\dim(S/I(G^c))$ is the maximum size of a maximal independent set of $G^c$ which is equal to the maximum size of a maximal clique in $G$. Since $G$ is triangle free, the maximum size of any maximal clique is $2$. Hence, by Remark \ref{key}, $\reg(S/I(G^c)) \leq 2$. Further, by \cite[Theorem 1]{Froberg}, $\reg(S/I(G^c))=2$ as $G$ is not chordal.
\end{proof}

\begin{remark}
    Let $G$ and $H$ be graphs with $\reg(I(G))=3$ and $\reg(I(H))\leq 3$. Then, by \cite[Proposition 3.12]{Am}, $\reg(I(G\ast H))=3$. For example, if $G^c$ is triangle free which is not a forest and $H$ is a bipartite graph whose edge ideal has regularity $3$ (\cite[Theorem 4.1]{Gim-14}) or co-chordal graph, then by Corollary  \ref{key1}, $\reg(I(G\ast H))=3$. In this way, one can construct graphs whose edge ideal have regularity $3$ which are neither bipartite graphs nor compliment of triangle free graphs.
\end{remark}

Let $F_n$ denote the bipartite graph on the vertex set $\{x_1,\dots, x_n,y_1,\dots, y_n\}$ with edge set $\{\{x_i,y_j\}: 1\leq i\leq j\leq n\}$. By virtue of \cite[Corollarly 9.1.14]{Mon}, $S/I(F_n)$ is Cohen-Macaulay. Since $\{x_1,\ldots,x_n\}$ is a minimal vertex cover of $F_n$,   $\hgt(I(F_n))=\dim(S/I(F_n))=n$. Observe that $\mu(I(F_n))=\binom{n+1}{2}$. Thus, it follows from  \cite[Theorem 4.3.7]{Vill} that $I(F_n)$ has linear resolution. The graph shown in Figure \ref{F4} is $F_4$.
\begin{figure}[H]
    \begin{tikzpicture}[scale=1]
    \draw (-2,3)-- (-2,1);
    \draw (-2,1)-- (-1,3);
    \draw (-2,1)-- (0,3);
    \draw (-2,1)-- (1,3);
    \draw (-1,3)-- (-1,1);
    \draw (-1,1)-- (0,3);
    \draw (0,3)-- (0,1);
    \draw (0,1)-- (1,3);
    \draw (1,3)-- (-1,1);
    \draw (1,3)-- (1,1);
    \begin{scriptsize}
    \fill  (-2,3) circle (1.5pt);
    \fill  (-2,1) circle (1.5pt);
    \fill  (-1,3) circle (1.5pt);
    \fill  (0,3) circle (1.5pt);
    \fill  (1,3) circle (1.5pt);
    \fill  (-1,1) circle (1.5pt);
    \fill  (0,1) circle (1.5pt);
    \fill  (1,1) circle (1.5pt);
    \end{scriptsize}
    \end{tikzpicture}
    \caption{}\label{F4}
\end{figure}

Now, for $1 \leq r \leq d$, we construct a graph $G$ with $\reg(S/I(G))=r$ and $\dim(S/I(G))=d$.
\begin{theorem}
    For $ 1 \leq r \leq d$, there exists a graph $G$ such that $\reg(S/I(G))=r$ and $\dim(S/I(G))=d$.    
\end{theorem}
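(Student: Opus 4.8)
The plan is to exhibit $G$ explicitly as a disjoint union of edges and isolated vertices, so that $I(G)$ becomes a complete intersection whose two invariants can be read off directly. Concretely, for $1\leq r\leq d$, I would take $G$ to be the disjoint union of $r$ independent edges $\{a_1,b_1\},\dots,\{a_r,b_r\}$ together with $d-r$ isolated vertices $z_1,\dots,z_{d-r}$. Writing $S=\sk[a_1,b_1,\dots,a_r,b_r,z_1,\dots,z_{d-r}]$, a polynomial ring in $r+d$ variables, we have $I(G)=\langle a_1b_1,\dots,a_rb_r\rangle$. Since the generators involve pairwise disjoint variables, they form a regular sequence, so $I(G)$ is a complete intersection of $r$ quadrics.

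First I would compute the dimension. As $I(G)$ is a complete intersection with $r$ generators, $\hgt(I(G))=r$, and hence $\dim(S/I(G))=(r+d)-r=d$. Equivalently, the maximal independent sets of $G$ have size $d$: choose one endpoint from each edge together with all of the isolated vertices. Next, for the regularity, the Koszul complex on the regular sequence $a_1b_1,\dots,a_rb_r$ is the minimal free resolution of $S/I(G)$, and its nonzero graded Betti numbers sit in homological degree $p$ and internal degree $2p$ for $0\leq p\leq r$. Therefore $\reg(S/I(G))=\max\{2p-p:0\leq p\leq r\}=r$. This already produces a graph with the required pair $(r,d)$, and the hypothesis $r\leq d$ is exactly the constraint forced by Remark \ref{key}.

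The verification is essentially immediate, so I do not expect a serious obstacle; the only point that needs care is confirming that the regularity equals $r$ on the nose rather than merely being bounded above by it. If one prefers to avoid the Koszul computation, the same conclusion follows from two standard facts: first, regularity and dimension are additive over disjoint unions of graphs, since the coordinate rings tensor over $\sk$; second, an isolated vertex contributes $0$ to regularity and $1$ to dimension, while a single edge $K_2$ satisfies $\reg(\sk[a,b]/\langle ab\rangle)=1=\dim(\sk[a,b]/\langle ab\rangle)$. Summing the contributions of the $r$ edges and the $d-r$ isolated vertices then yields $\reg(S/I(G))=r$ and $\dim(S/I(G))=d$. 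Should isolated vertices be considered undesirable, one can instead replace the $d-r$ isolated vertices and one of the edges by a single star $K_{1,d-r+1}$, whose edge ideal has a linear resolution (so it contributes $1$ to regularity) and whose independence number is $d-r+1$; this gives the same two invariants with no isolated vertices.
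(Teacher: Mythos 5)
Your proposal is correct, but it takes a genuinely different route from the paper. The paper's proof is also a disjoint-union construction: it takes $r-1$ disjoint edges together with one copy of the Cohen--Macaulay bipartite graph $F_{d-r+1}$ (the graph with edges $\{x_i,y_j\}$, $i\leq j$, introduced just before the theorem), whose edge ideal has linear resolution and whose quotient has dimension $d-r+1$; additivity of regularity and dimension over connected components then yields the pair $(r,d)$. Your primary construction --- $r$ disjoint edges plus $d-r$ isolated vertices --- replaces the facts about $F_n$ (Cohen--Macaulayness via \cite[Corollary 9.1.14]{Mon} and linearity of the resolution via \cite[Theorem 4.3.7]{Vill}) with the Koszul complex of the complete intersection $\langle a_1b_1,\dots,a_rb_r\rangle$, which is more elementary and entirely self-contained; your computations of both invariants are correct, including the sharpness of $\reg(S/I(G))=r$ read off from the Betti numbers $\beta_{p,2p}$. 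The one point of convention is the use of isolated vertices: under the paper's setup $S=\sk[V(G)]$ these variables do belong to $S$ and raise the dimension exactly as you claim, so the construction is legitimate here, though some authors exclude isolated vertices when working with edge ideals; your fallback using $K_{1,d-r+1}$ in place of the isolated vertices and one edge disposes of that objection, and is in fact structurally the same as the paper's construction with $F_{d-r+1}$ swapped for a star (both have linear resolution and dimension $d-r+1$, and the star requires no Cohen--Macaulay input). The paper's preference for $F_n$ is best explained by its reuse in Theorem \ref{main2}, Case III, where a Cohen--Macaulay component of prescribed depth is genuinely needed and a star would not suffice; for the present theorem alone, your constructions are simpler.
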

\begin{proof}    
    Let $G$ be a disconnected graph with $r$ components such that the $r-1$ components are edges and one component is $F_{d-r+1}$. Then, $\reg(S/I(G))=r$ and $\dim(S/I(G))=d$.
\end{proof}
Note that $\{x_{2i}: 1 \leq i \leq n\}$ is a minimal vertex cover of both $P_{2n}$ and $P_{2n+1}$. Therefore, $\hgt(I(P_{2n})) \leq n$ and $\hgt(I(P_{2n+1})) \leq n$. Now, if possible, let $A$ be a minimal vertex cover of $P_{2n}$ such that $|A| <n$. Then, it is easy to see that $|\{e :  e \in E(P_{2n}) \text{ and } e \cap A \neq \emptyset\}| \leq 2n-2$, which is a contradiction. Similarly, one can prove that $\hgt(I(P_{2n+1}))=n$. Hence, $\dim(S/I(P_{2n}))=n$ and $\dim(S/I(P_{2n+1}))=n+1$.
\begin{lemma}\label{MultPath}
    Let $G=P_{2n+1}$ be a path graph. Then, $e(S/I(P_{2n+1}))=1$.
\end{lemma}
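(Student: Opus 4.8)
The plan is to compute the multiplicity through the identity $e(S/I)=|\Minh(I)|$ for square-free monomial ideals recorded above, reducing everything to a count of minimal primes of minimal height. Label the vertices of $P_{2n+1}$ as $x_1,\dots,x_{2n+1}$ so that $E(P_{2n+1})=\{\{x_i,x_{i+1}\}:1\le i\le 2n\}$. Since minimal primes of $I(P_{2n+1})$ correspond to minimal vertex covers of $P_{2n+1}$, with a minimal prime of height $h$ matching a minimal vertex cover of cardinality $h$, and since it has already been established that $\hgt(I(P_{2n+1}))=n$, the primes of $\Minh(I(P_{2n+1}))$ are precisely those arising from vertex covers of the minimum size $n$. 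Thus it suffices to show that $P_{2n+1}$ admits a \emph{unique} vertex cover of size $n$.

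First I would exhibit one such cover: the set $C_0=\{x_2,x_4,\dots,x_{2n}\}$ of even-indexed vertices has cardinality $n$ and meets every edge $\{x_i,x_{i+1}\}$, because exactly one of $i,i+1$ is even and that even index lies in $\{2,\dots,2n\}$.

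The core of the argument is the uniqueness, which I would obtain from a degree count. The graph $P_{2n+1}$ has $2n$ edges, and each vertex is incident to at most two of them. Hence if $C$ is any vertex cover with $|C|=n$, then on one hand $\sum_{v\in C}\deg(v)\le 2|C|=2n$, while on the other hand $\sum_{v\in C}\deg(v)=\sum_{e\in E(P_{2n+1})}|e\cap C|\ge 2n$ since $C$ meets every edge. Equality throughout forces every vertex of $C$ to have degree $2$, so the endpoints $x_1,x_{2n+1}$ are excluded, and forces every edge to contain exactly one vertex of $C$, so $C$ is an independent set. A straightforward forcing then pins $C$ down: since $x_1\notin C$, the edge $\{x_1,x_2\}$ forces $x_2\in C$; independence gives $x_3\notin C$; the edge $\{x_3,x_4\}$ forces $x_4\in C$; inductively $x_{2k}\in C$ and $x_{2k+1}\notin C$ for all $k$, whence $C=C_0$.

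I expect the degree count — the step that rules out the endpoints and forces independence — to be the only delicate point; after that the inductive forcing is immediate, and translating back through $e(S/I)=|\Minh(I)|$ yields $|\Minh(I(P_{2n+1}))|=1$, that is, $e(S/I(P_{2n+1}))=1$.
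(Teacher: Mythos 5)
Your proof is correct, but it follows a genuinely different route from the paper. The paper argues by induction on $n$: it uses the short exact sequence
\begin{equation*}
0 \longrightarrow \frac{S}{I(P_{2n+1}):x_{2n+1}} \longrightarrow \frac{S}{I(P_{2n+1})} \longrightarrow \frac{S}{I(P_{2n})+\langle x_{2n+1}\rangle} \longrightarrow 0,
\end{equation*}
observes that $I(P_{2n+1}):x_{2n+1}=I(P_{2n-1})+\langle x_{2n}\rangle$, and compares dimensions ($n+1$, $n+1$, and $n$, respectively) so that additivity of multiplicity in the top dimension gives $e(S/I(P_{2n+1}))=e(S/(I(P_{2n-1})+\langle x_{2n}\rangle))$, which is $1$ by the induction hypothesis; only the base case $P_3$ is handled by noting it has a unique minimum vertex cover. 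You instead make the base-case reasoning carry the whole proof: using $e(S/I)=|\Minh(I)|$ and the fact (established in the paper just before the lemma) that $\hgt(I(P_{2n+1}))=n$, you reduce to showing $P_{2n+1}$ has a unique vertex cover of size $n$, which your double-counting of incidences pins down cleanly -- equality in $2n\le\sum_{e}|e\cap C|=\sum_{v\in C}\deg(v)\le 2n$ forces $C$ to avoid the two leaves and to be independent, and the forcing along the path then yields $C=\{x_2,x_4,\dots,x_{2n}\}$. Your argument is purely combinatorial and self-contained, avoids the colon-ideal and exact-sequence machinery entirely, and gives the extra information that the minimum cover is unique and explicit; the paper's approach, on the other hand, is the same colon-ideal induction it deploys elsewhere (e.g., in Theorem \ref{linearWhiskerLem}), so it keeps the techniques of the section uniform and adapts more readily to situations where a direct enumeration of minimum covers would be awkward.
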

\begin{proof}
    We prove the assertion  by induction on $n$. If $n=1$, then $P_{3}$ has only one minimal vertex cover of minimal size. Hence, $e(S/I(P_{3})) =1$. Assume that $n>1$, and $e(S/I(P_{2n-1}))=1$. Now, consider the following short exact sequence,
    $$0 \to \frac{S}{I(P_{2n+1}):x_{2n+1}} \to \frac{S}{I(P_{2n+1})} \to \frac{S}{I(P_{2n})+\langle x_{2n+1}\rangle}    \to 0.$$
    Note that $I(P_{2n+1}):x_{2n+1} =I(P_{2n-1})+\langle x_{2n}\rangle$. Observe that $\dim(S/(I(P_{2n})+\langle x_{2n+1}\rangle))=n$,  $\dim(S/(I(P_{2n+1}):x_{2n+1}))=n+1$ and $\dim(S/I(P_{2n+1}))=n+1$.  Hence, $e(S/I(P_{2n+1}))=e(S/(I(P_{2n-1})+\langle x_{2n}\rangle))=1$.
\end{proof}

\begin{remark}
    Let $I \subseteq \m^2$ be a nonzero homogeneous ideal in a polynomial ring $S$ with $H(S/I, t)=(1+h_1t+h_2t^2+\dots+h_st^s)/(1-t)^d$, where $d=\dim(S/I)$. Then, $h_1=\codim(S/I)$. This implies that for any graph $G$ if $e(S/I(G))=1$, then $\deg(h_{S/I(G)}(t))\geq 2$.
    Note that if $I$ is a square-free monomial ideal, then by \cite[Theorem 5.1.7]{Bh1993}, $\deg(h_{S/I}(t))\leq \dim(S/I)$. Therefore, $e(S/I(G))=1$ implies that $\dim(S/I(G))\geq 2$. 
\end{remark}

We now prove that the multiplicity of edge ideal of a graph is not bounded by algebraic invariants such as regularity, depth, dimension, and degree of $h$-polynomial. We construct graphs with fixed multiplicity and one of the other invariants.

\begin{theorem}\label{main1}
    Let $e,r,s,\delta,d$ be positive integers. Then, we have the following:
    \begin{enumerate}[\rm a)]
        \item There exists a graph $G$ with $e(S/I(G))=e$ and $\reg(S/I(G))=r$.
        \item There exists a graph $G$ with $e(S/I(G))=e$ and $\deg (h_{S/I(G)}(t))=s$, provided $e\cdot s\geq 2$.
        \item There exists a graph $G$ with $e(S/I(G))=e$ and $\depth(S/I(G))=\delta$.
        \item There exists a graph $G$ with $e(S/I(G))=e$ and $\dim(S/I(G))=d$, provided $e\cdot d\geq 2$.
    \end{enumerate}
\end{theorem}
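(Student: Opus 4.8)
Each part is proved by assembling $G$ from a few ``gadget'' graphs and reading off the invariants from one of two operations. If $G=G_1\sqcup G_2$ is a disjoint union on disjoint vertex sets, then $S/I(G)\cong S_1/I(G_1)\otimes_{\sk}S_2/I(G_2)$, so the Hilbert series multiply; hence $\dim$, $\depth$, $\reg$ and $\deg h_{S/I}$ are \emph{additive} while $e$ is \emph{multiplicative} (the additivity of $\reg$ and $\dim$ is exactly what is used in the construction just before Lemma~\ref{MultPath}). The gadgets are: the complete graph $K_n$, which for $n\ge2$ is Cohen--Macaulay of dimension one with $e=n$ and $\reg=\depth=\dim=\deg h=1$; the whiskered graph $W(n,r)$ of Theorem~\ref{linearWhiskerLem}, with $e=n-r$, $\reg=1$, $\depth=r$, $\dim=r+1$; and the paths $P_3,P_5$, which have $e=1$ by Lemma~\ref{MultPath} and, by a one-line Hilbert-series computation from their independence complexes, $\deg h=2$ and $\deg h=3$ respectively. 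In each part I let one gadget carry the multiplicity and use the others to tune the second invariant.

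\textbf{Parts (c), (d) and (a).} Part (c) needs a single graph: take $G=W(e+\delta,\delta)$, so that $\depth=\delta$ and, since $\delta<e+\delta$, $e(S/I(G))=(e+\delta)-\delta=e$. For (d) with $d\ge2$ take $G=W(e+d-1,d-1)$, giving $\dim=(d-1)+1=d$ and $e(S/I(G))=e$; for $d=1$ (hence $e\ge2$) take $G=K_e$. The hypothesis $e\cdot d\ge2$ excludes only $(e,d)=(1,1)$, which is genuinely impossible for a nonzero edge ideal by the Remark ($e=1$ forces $\dim\ge2$). For (a), every gadget has regularity $1$ and $\reg$ is additive, so I realise $\reg=r$ by $r$ regularity-one pieces whose multiplicities multiply to $e$: for $e\ge2$ take the disjoint union of $K_e$ with $r-1$ copies of $P_3$, giving $\reg(S/I(G))=1+(r-1)=r$ and $e(S/I(G))=e\cdot 1^{\,r-1}=e$; for $e=1$ take $r$ disjoint copies of $P_3$.

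\textbf{Part (b).} Here $\deg h$ adds and $e$ multiplies under disjoint union, and $P_3,P_5$ are multiplicity-one pieces of degrees $2$ and $3$; since every integer $t\ge2$ is a nonnegative combination $2a+3b$, disjoint unions of copies of $P_3$ and $P_5$ realise $(e,\deg h)=(1,t)$ for every $t\ge2$. This settles $e=1$, where $e\cdot s\ge2$ forces $s\ge2$. For $e\ge2$: if $s=1$ take $K_e$; if $s\ge3$ take $K_e$ together with a $P_3/P_5$ combination of total degree $s-1\ge2$, giving $\deg h=1+(s-1)=s$ and multiplicity $e$. The single remaining case $s=2,\ e\ge2$ cannot be built from $K_n$'s and paths when $e$ is prime, so I pass to a self-join: by Proposition~\ref{selfjoin}(b),(c), $G=P_3^{\ast e}$ has multiplicity $e\cdot e(S/I(P_3))=e$ and Hilbert series $e\,H(S/I(P_3),t)-(e-1)$, whose numerator simplifies to $1+(3e-2)t-(2e-1)t^2$; since this is nonzero at $t=1$ the fraction is reduced, so $\dim(S/I(G))=2$ and $\deg h=2$.

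\textbf{Main obstacle.} The crux is part (b). The degree of the $h$-polynomial is the one invariant not governed by the depth/dimension/regularity calculus, so its value on each gadget must be extracted by hand from a Hilbert series; moreover the implication ``$e=1\Rightarrow\deg h\ge2$'' both forces the hypothesis $e\cdot s\ge2$ and obstructs the naive disjoint-union construction precisely at $s=2$ with $e$ prime, which is why the self-join $P_3^{\ast e}$ is indispensable. One must also resist any uniform ``$\deg h=\dim$'' rule for odd paths (it already fails for $P_7$, whose independence complex has vanishing reduced Euler characteristic), so only the specific small paths $P_3,P_5$ are used as padding.
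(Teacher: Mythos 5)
Your proof is correct, and it reaches three of the four parts by a genuinely different route than the paper. The paper's uniform device is the self-join of Proposition \ref{selfjoin}: for (a) it takes $H^{\ast e}$ where $H$ is a long odd path ($P_{3r}$ or $P_{3r+1}$), whose regularity equals $r$ by \cite[Theorem 4.7]{selvi_ha} and whose multiplicity is $1$ by Lemma \ref{MultPath}; for (b) and (d) it uses the single family $K_{1,s}^{\ast e}$, whose Hilbert series $\frac{e}{1-t}+\frac{e}{(1-t)^s}-(2e-1)$ delivers $e(S/I(G))=e$ and $\deg h_{S/I(G)}(t)=s=\dim(S/I(G))$ in one stroke; part (c) is your $W(e+\delta,\delta)$, identical. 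You instead set up a disjoint-union calculus (Hilbert series multiply, so $\reg$, $\depth$, $\dim$, $\deg h$ add while $e$ multiplies) and assemble gadgets: $K_e\sqcup P_3^{\sqcup(r-1)}$ for (a), $W(e+d-1,d-1)$ for (d), and $K_e$, $P_3$, $P_5$ combinations for (b), falling back on the self-join $P_3^{\ast e}$ --- which is literally the paper's $K_{1,2}^{\ast e}$, since $P_3=K_{1,2}$ --- exactly in the case $(s=2,\ e\geq 2)$ that your calculus cannot reach (multiplicativity of $e$ together with ``$e=1\Rightarrow\deg h\geq 2$'' obstructs prime $e$, as you note). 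What your route buys: part (a) no longer needs the path-regularity formula of \cite{selvi_ha} nor Lemma \ref{MultPath} beyond the trivial case $P_3$, and part (d) recycles Theorem \ref{linearWhiskerLem} instead of a fresh Hilbert-series computation. What the paper's route buys: uniformity --- no case analysis, no numerical-semigroup padding $s=2a+3b$, and no hand computation of $h_{S/I(P_5)}(t)$. Your computations all check out: $h_{S/I(P_3)}(t)=1+t-t^2$, $h_{S/I(P_5)}(t)=1+2t-t^2-t^3$, the numerator of $H(S/I(P_3^{\ast e}),t)$ is $1+(3e-2)t-(2e-1)t^2$ with value $e\neq 0$ at $t=1$, and your caution that $\deg h$ can drop below $\dim$ (as for $P_7$) is warranted and correct.
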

Observe that by Proposition \ref{selfjoin}, the regularity and dimension remain the same under self join operation but multiplicity increases. This observation is the key idea for the proof of Theorem \ref{main1}. 

\begin{proof}[Proof of Theorem \ref{main1}]
    (a) If $r$ is odd, then take $H=P_{3r}$, otherwise take $H=P_{3r+1}$. Then, by \cite[Theorem 4.7]{selvi_ha}, $\reg(S_H/I(H))=r$ and by Lemma \ref{MultPath}, $e(S_H/I(H))=1$, where $S_H=\sk[V(H)]$. Now, take $G=H^{\ast e}$ and $S=\sk[V(G)]$. Then, by Proposition \ref{selfjoin}, we have $\reg(S/I(G))=r$ and $e(S/I(G))=e$. 
    
    (b) and (d): For $d=s=1$, take $G=K_e$. Now,  we assume that $s\geq 2$, and $d\geq 2$. By short exact sequence \eqref{ses}, we get
    \begin{align}\label{hilbert-star}
    H\left(\frac{S_{K_{1,s}}}{I(K_{1,s})},t\right)=\frac{1}{(1-t)^s}+\frac{1}{(1-t)}-1,
    \end{align}
     where $S_{K_{1,s}}=\sk[V({K_{1,s}})].$      Take $G=K_{1,s}^{\ast e}$.
    Using Proposition \ref{selfjoin}, we have $$H\left(\frac{S}{I(G)},t\right)=\dfrac{e}{1-t}+\dfrac{e}{(1-t)^s}-(2e-1).$$ Thus,  $e(S/I(G))=e$ and $\deg(h_{S/I(G)}(t))=s=\dim(S/I(G))$.
    
    (c) For any $e$ and $\delta$, take $G=W(e+\delta,\delta)$. Then, it follows from Theorem \ref{linearWhiskerLem} that $e(S/I(G))=e$ and $\depth(S/I(G))=\delta$.
\end{proof}

We now prove that the depth of edge ideal of a graph is independent from regularity and degree of $h$-polynomial. Indeed, we construct graph whose edge ideal has depth $\delta$ and regularity $r$. Also, we show the existence of a graph whose edge ideal has depth $\delta $ and degree of $h$-polynomial $s$.

\begin{theorem}\label{main2}
    Let $\delta,r,s$ be positive integers. Then, we have the following:
    \begin{enumerate}[\rm a)]
        \item There exists a graph $G$ such that $\depth(S/I(G))=\delta$ and $\reg(S/I(G))=r$.
        \item There exists a graph $G$ such that $\depth(S/I(G))=\delta$ and $\deg (h_{S/I(G)}(t))=s$.
    \end{enumerate}
\end{theorem}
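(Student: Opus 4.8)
The plan is to handle both parts with a single device: attaching isolated vertices. If $G'$ is obtained from a graph $G$ by adjoining one isolated vertex $w$, then $S'/I(G')=(S/I(G))[w]$ is a polynomial extension, so $\depth(S'/I(G'))=\depth(S/I(G))+1$, while the graded Betti numbers are unchanged (hence $\reg$ is unchanged) and the Hilbert series is merely multiplied by $1/(1-t)$ (hence the $h$-polynomial, and in particular its degree, is unchanged). Thus once I produce a graph of depth $1$ carrying the prescribed second invariant, I can reach any target depth $\delta\ge 1$ by adjoining $\delta-1$ isolated vertices. Producing such a depth-$1$ graph is exactly where the join operation enters, since by the short exact sequence \eqref{ses} every join $G_1\ast G_2$ satisfies $\depth(S/I(G_1\ast G_2))=1$.

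For part (a), first I would choose a path $H$ with $\reg(S_H/I(H))=r$; such a path exists by \cite[Theorem 4.7]{selvi_ha} (for instance $H=P_{3r}$ or $H=P_{3r+1}$). Next I would form the self-join $H^{\ast 2}=H\ast H$. Proposition \ref{selfjoin}(a) gives $\reg(S/I(H^{\ast 2}))=\reg(S_H/I(H))=r$, while \eqref{ses} gives $\depth(S/I(H^{\ast 2}))=1$; the purpose of the self-join is precisely that it collapses the depth to $1$ without moving the regularity. Finally I would let $G$ be $H^{\ast 2}$ together with $\delta-1$ isolated vertices, so that $\depth(S/I(G))=1+(\delta-1)=\delta$ and $\reg(S/I(G))=r$.

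For part (b), I would take $G_0=K_{1,s}$, realized as the join $K_1\ast\overline{K_s}$ of a single vertex with the edgeless graph on $s$ vertices. By \eqref{ses}, $\depth(S/I(K_{1,s}))=1$, and from the Hilbert series \eqref{hilbert-star} one computes, after clearing the denominator $(1-t)^s$, that the $h$-polynomial of $S_{K_{1,s}}/I(K_{1,s})$ equals $1+t(1-t)^{s-1}$, which has degree exactly $s$. Adjoining $\delta-1$ isolated vertices to $K_{1,s}$ then yields a graph $G$ with $\depth(S/I(G))=\delta$ and $\deg(h_{S/I(G)}(t))=s$, since the extra free variables leave the $h$-polynomial untouched.

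The genuinely delicate step in both parts is the first one, namely manufacturing a depth-$1$ representative with a prescribed regularity or $h$-degree: depth, once pinned to $1$ by a join, must be raised back up without disturbing the other invariant, which is what forces the use of isolated vertices (which move only the depth) rather than any further graph operation. The remaining verifications are the behavior of $\depth$, $\reg$, and the $h$-polynomial under the polynomial extension $S/I(G)\mapsto (S/I(G))[w]$, all of which are standard consequences of flat base change and of the Hilbert series being multiplied by $1/(1-t)$.
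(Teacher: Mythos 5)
Your proof is correct, but after the depth-one base case (which you and the paper construct essentially identically: a self-join of paths $P_{3r}^{\ast 2}$ for part (a), the star $K_{1,s}$ for part (b), both of depth $1$ by \eqref{ses}) it takes a genuinely different route. The paper pads with \emph{disjoint edges}, each of which raises depth, regularity, and $\deg h_{S/I}$ by exactly $1$ (via the cited Hoa--Tam lemmas for ideals in disjoint sets of variables); since this moves both invariants simultaneously, the paper needs a four-case analysis in each part, and in the regime $\delta>r$ (resp.\ $\delta>s$) it must switch to entirely different base graphs, namely the whiskered complete graphs $W(n,r)$ of Theorem \ref{linearWhiskerLem} (resp.\ the Cohen--Macaulay bipartite graphs $F_n$). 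Your padding device of \emph{isolated vertices} moves only the depth, leaving the graded Betti numbers (hence $\reg$) and the $h$-polynomial untouched, so all cases collapse into one uniform construction and the families $W(n,r)$ and $F_n$ are never needed; your supporting computations are sound, including $h_{S/I(K_{1,s})}(t)=1+t(1-t)^{s-1}$ of degree exactly $s$, which agrees with \eqref{hilbert-star}. The trade-off is that your graphs necessarily contain isolated vertices, and this is forced by your scheme: since $\reg$ and $\deg h$ are additive over disjoint unions, no component containing an edge can raise depth without also raising the other invariant. The theorem as stated does not exclude isolated vertices (the paper itself uses edgeless graphs as join factors, e.g.\ $K_{1,s}$ is the join of a vertex with an edgeless graph), so your proof is valid as written; but the paper's constructions give the formally stronger conclusion that $G$ can be chosen with no isolated vertices, which matters under the convention, common in this literature, that $V(G)$ is exactly the vertex set supporting $E(G)$.
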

First, we prove the above theorem for $\delta=1$ or $r=1$ or $s=1$. Then, we use the following fact to complete our theorem: Let $I\subset R=\sk[x_1,\dots,x_m]$ and $J\subset T=\sk[y_1,\dots,y_n]$ be homogeneous ideals and $S=\sk[x_1,\dots,x_m,y_1,\dots,y_n]$. Then, it follows from \cite[Lemma 1,5]{HT} that $\deg(h_{S/(I+J)}(t))=\deg(h_{R/I}(t))+\deg(h_{T/J}(t))$. By  \cite[Lemma 2.2]{HT}, $\depth(S/(I+J))=\depth(R/I)+\depth(T/J)$ and by \cite[Lemma 3.2]{HT}, $\reg(S/(I+J))=\reg(R/I)+\reg(T/J)$. 
\begin{proof}[Proof of Theorem \ref{main2}]
    (a) \textbf{Case I}. For $\delta=1$ and $r \geq 1$, take $G=P_{3r}^{\ast 2}$. By Proposition \ref{selfjoin} and \cite[Theorem 4.7]{selvi_ha}, $\reg(S/I(G))=r$, and by the short exact sequence \eqref{ses}, $\depth(S/I(G))=1$.  
    \\ \textbf{Case II}. For  $2\leq \delta \leq r$, take $G$ to be the union of $P_{3(r-\delta+1)}^{\ast 2}$ and $\delta-1$ disjoint edges. Then, we get 
    $\depth(S/I(G))=\delta$ and $\reg(S/I(G))=r$.
    \\
    \textbf{Case III}. For $r=1$, consider $G=W(\delta+1,\delta)$. By Theorem \ref{linearWhiskerLem}, $\reg(S/I(G))=1$ and $\depth(S/I(G))=\delta$.
    \\ \textbf{Case IV}. For $2\leq r<\delta$, take $G$ to be the union of $W(\delta-r+2,\delta-r+1)$ and $r-1$ disjoint edges. This completes the proof of (a).
    \\ (b) \textbf{Case I}. For $\delta=1$, consider $G=K_{1,s}$. It follows from \eqref{ses} that $\depth(S/I(G))=1$ and by \eqref{hilbert-star}, $\deg (h_{S/I(G)}(t))=s$.
    \\ \textbf{Case II}. For $2\leq \delta \leq s$, take $G$ to be the union of $K_{1,s-\delta+1}$ and $\delta -1$ disjoint edges. Then, $\depth(S/I(G))=\delta$ and $\deg (h_{S/I(G)}(t))=s$.
    \\ \textbf{Case III}. For $s=1$, take $G=F_{\delta}$. Since $S/I(G)$ is Cohen-Macaulay of dimension $\delta$ and $\reg(S/I(G))=1$, we have $\depth(S/I(G))=\delta$ and $\deg (h_{S/I(G)}(t))=1$. 
    \\ \textbf{Case IV}. For $2\leq s< \delta$, take $G$ to be the union of $F_{\delta-s+1}$ and $s-1$ disjoint edges. Now, the assertion follows from Case III.
\end{proof}

\end{document}